\documentclass[11pt]{amsart}
\usepackage{amsfonts}
\usepackage{hyperref}
\usepackage[utf8]{inputenc}
\usepackage[T1]{fontenc}
\usepackage[dvips]{graphicx}
\usepackage{amssymb}
\usepackage{amsmath}
\usepackage{dsfont}
\usepackage{color}
\usepackage{latexsym}
\usepackage{bbm}
\usepackage{color}
\usepackage{amsthm}
\usepackage{multicol}
\usepackage{tikz}
\usepackage[top   = 2.75cm,
bottom = 2.50cm,
left   = 2.50cm,
right  = 2.00cm]{geometry}
\usetikzlibrary{calc,decorations.pathreplacing}
\bibstyle{plain}
\theoremstyle{plain}
\newtheorem{theorem}{Theorem}
\newtheorem{proposition}[theorem]{Proposition}
\newtheorem{lemma}[theorem]{Lemma}
\newtheorem*{BT}{Banakiewicz Theorem}
\newtheorem*{obs}{Observation}
\newtheorem{problem}[theorem]{Problem}
\newcommand{\N}{\mathbb N}

\begin{document}

\title[Uniqueness of atomic measures]{A characterization of uniqueness of purely atomic finite measures with central Cantor set range}

\newcommand{\eacr}{\newline\indent}

\author{Piotr Nowakowski}
\address{Faculty of Mathematics and Computer Science
\\University of Lodz
\\Banacha 22,
90-238 \L\'{o}d\'{z}
\\Poland\\
%\\{\textcolor{orcid-green}{\aiOrcid}}
 ORCID: 0000-0002-3655-4991}
\email{piotr.nowakowski@wmii.uni.lodz.pl}

\author{Franciszek Prus-Wi\'{s}niowski}
\address{\llap{*\,}Franciszek Prus-Wi\'{s}niowski\eacr
Instytut Matematyki\eacr
Uniwersytet Szczeci\'{n}ski\eacr
ul. Wielkopolska 15\eacr
PL-70-453 Szczecin\eacr
Poland\acr ORCID 0000-0002-0275-6122}
\email{franciszek.prus-wisniowski@usz.edu.pl}

\subjclass[2020]{Primary: 40A05; Secondary: 11B05, 28A75}
\keywords{ atomic measure, range of a measure, achievement set, central Cantor set, center of distances}

%\date{\today}

\newcommand{\acr}{\newline\indent}

\begin{abstract}
We study purely atomic measures whose range is a central Cantor set and characterize those central Cantor sets that are the range of exactly one such measure. Next, we extend the characterization to the case of symmetric Cantor sets using a different method of proof. Finally, we make a few initial observations and remarks on recovering a measure whose range is a Cantorval.
\end{abstract}
\maketitle
Let $\mu$ be a finite atomic measure with infinite range. We can identify it with a measure $\mu$ on $\mathbb N$ such that $\mu({n})\ge\mu({n+1})>0$ for all $n$. Denoting $x_n:=\mu({n})$, we get $\sum x_n<+\infty$, since $\mu$ is finite. Thus, the range of $\mu$ is
$$
\textnormal{rng}(\mu)\ = \left\{\,\mu E:\quad E\subset \mathbb N\,\right\}\ =\ \left\{y\in\mathbb R:\quad \exists A\subset \mathbb N\quad y=\sum_{n\in A}x_n\,\right\}.
$$
The latter set is the achievement set of the sequence $(x_n)$ \cite{Jones}, \cite{BFPW1}. Since our note addresses the problem of recovering a finite measure with infinite range, we will consider achievement sets only of summable sequences of positive, non-increasing terms. Henceforth, all sequences discussed in this note are assumed to be summable, positive, and arranged in a non-increasing manner, unless explicitly stated otherwise. We will say that an achievement set $E(z_n)$ is uniquely achievable (or uniquely determined) if there is no other monotone sequence $(y_n)$ such that $E(z_n)=E(y_n)$. Thus, according to the Guthrie-Nymann Classification Theorem \cite[Theorem 1]{GN88} (see also \cite[Theorem 1]{NS}), the range of $\mu$ is either a Cantor set or a Cantorval. A Cantorval is a subset of the reals that is bounded, regularly closed and whose boundary is a Cantor set. More geometrically,  a Cantorval is any set subset of the reals homeomorphic to the set
$$
GN\ :=\ C\,\cup\,\bigcup_n \mathcal{G}_{2n-1}
$$
where $C$ is the Cantor ternary set $C=E(\tfrac2{3^n})$ and $\mathcal{G}_{2n-1}$ is the union of all $4^{n-1}$ $C$-gaps removed from $[0,1]$ in the ($2n-1$)-th step of the standard geometric construction of $C$ \cite{GN88}, \cite{MNP}.

 Not long ago, Bartoszewicz, G\l\c{a}b and Marchwicki investigated the problem of recovering a purely atomic finite measure from its range \cite{BGM18}, aiming to answer a question of T. Banakh as to whether Cantor achievement sets are uniquely defined (that is, whether they are achievement sets of only one sequence). They found three different sufficient conditions for $E(x_n)$ to be a uniquely achievable Cantor set \cite[Theorems 4.2, 4.7, 4.8]{BGM18}. The first of these says that if $x_n>2x_{n+1}$ for all $n$, then the equality $E(x_n)=E(y_n)$ implies that $(x_n)=(y_n)$. The conditions $x_n>2x_{n+1}$ for all $n$ imply that $x_n>r_n:=\sum_{k>n}x_k$ for all $n$, that is, the sequence $(x_n)$ is fast convergent. Fast convergence is a very natural sufficient condition for $E(x_n)$ to be a Cantor set. It was already discovered by the father of achievement set theory, Soichi Kakeya \cite{Kakeya}, \cite{Kakeya2}.

Given a summable sequence $(x_n)$ of positive, non-increasing terms, we have
$$
E\ =\ E(x_n)\ \ =\ \ E_1^k(x_n)\ +\ E_k(x_n)
$$
for every $k\in\mathbb N$, where
$$
E_1^k\ =\ E_1^k(x_n)\ :=\ \left\{\sum_{n\in A}x_n:\quad A\subset\{1,\ldots,k\}\,\right\}
$$
is the set of \textsl{$k$-initial subsums} of $(x_n)$ and
$$
E_k\ =\ E_k(x_n)\ :=\ \left\{\sum_{n\in A}x_n:\quad A\subset\{k+1,k+2, \ldots\,\}\,\right\}
$$
is the achievement set of the $k$-th remainder of $(x_n)$, that is, of the sequence $(x_n)_{n>k}$. The set of $k$-initial subsums is a special case of a more general definition that we will need in the proof of Lemma \ref{l6} :
$$
E_m^k\ =\ E_m^k(x_n)\ :=\ \left\{\sum_{n\in A}x_n:\quad A\subset\{m, m+1,\ldots,k\}\,\right\}
$$
where $m,k$ are positive integers such that $m\le k$.

The $k$-th iterate of $E$ is defined by
$$
I_k\ :=\ E_1^k\,+\,[0, r_k]\ \ =\ \ \bigcup_{f\in E_1^k}[f,f+r_k].
$$
Each $I_k$ is a multi-interval set. We also write $I_0:=[0,r_0]$ and call this interval \textsl{the fundamental interval} of $E$. Every achievement set is symmetric with respect to the center of its fundamental interval.

Always $E=\bigcap_kI_k$ \cite[Fact 21.8]{BFPW1}. The family of all connectivity components of $I_{k-1}\setminus I_k$ is denoted by $\mathcal{G}_k$, and its elements are called $E$-gaps of order $k$. It is not difficult to see that the family $\mathcal{G}_k$ is nonempty if and only if $r_k<x_k$. The First Gap Lemma \cite[Fact 2.19]{BFPW1} says that if $r_k<x_k$, then the interval $(r_k,x_k)$ is a gap of $E$. Gaps of this form are called \textsl{principal}. We will say that an $E$-gap $G$ is \textsl{dominating} if all $E$-gaps lying to the left of $G$ are shorter than $G$. The Third Gap Lemma \cite[Lemma 4]{BFGPWS} says that every dominating gap is principal.

We should keep in mind that being a gap of order $k$ may depend on a particular representation of an achievable set $E$. Consider two sequences $(x_n)$ and $(z_n)$ given by $x_1=2$, $x_2=1$, $x_n=\frac2{3^{n-2}}$ for $n\ge3$ and $z_1=z_2=z_3=1$, $z_n=\frac2{3^{n-3}}$ for $n\ge4$. Then $E:=E(x_n)=E(z_n)$ is a Cantor set, and $(\frac13,\frac23)$ is an $E$-gap of order 2 with respect to the representation $E=E(x_n)$ and an $E$-gap of order 3 with respect to the representation $E=E(z_n)$. On the other hand, being a dominating gap does not depend on the representation. In fact, the concept of domination of a gap makes sense for any compact subset of the real line.

Sometimes we discuss two sequences at once, say, $x=(x_i)$ and $y=(y_i)$. In order to distinguish their remainders, we will use an additional superscript: $r^x_n:=\sum_{i>n}x_i$ and $r^y_n:=\sum_{i>n}y_i$.

A sequence $(y_i)$ of positive, non-increasing terms converging to 0 is said to be \textsl{semi-fast convergent} if
$$
y_n\ >\ \sum_{i:,y_i<y_n}y_i \qquad\text{for all $n\in\mathbb N$.}
$$
In particular, $\sum_iy_i<+\infty$. A strictly decreasing sequence $(y_i)$ is semi-fast convergent if and only if it is fast convergent. If $(\alpha_p)_{p\in\mathbb N}$ is the decreasing sequence of all values of a semi-fast convergent sequence $(y_i)$, then there is a unique sequence $(M_p)_{p\in\mathbb N}$ of positive integers such that
$$
y_i\ =\ \alpha_p \quad\text{for}\ \ \sum_{k=0}^{p-1}M_k\ <\ i\ \le\ \sum_{k=0}^pM_k
$$
where $M_0:=0$. The numbers $M_p$ are the multiplicities of the corresponding values $\alpha_p$ in the sequence $(y_i)$. It is customary to identify $(y_i)=(\alpha_p;M_p)$. Then the sum of the sequence is $\sum_iy_i\ =\ \sum_p(\alpha_p;M_p)\ =\ \sum_{p=1}^\infty \alpha_pM_p$. The symbol $E(\alpha_p;M_p)$ stands for the achievement set $E(y_i)$. Define $R_p:=\sum_{i>p}\alpha_iM_i$. The achievement set of a semi-fast convergent sequence $(y_i)=(\alpha_p;M_p)$ has gaps of order $k$ if and only if $k=\sum_{j=1}^pM_j$ for some $p$. Then each gap of such an order $k$ has length $\alpha_p-R_p$. Achievement sets of semi-fast convergent series are always Cantor sets \cite[Theorem 16]{BFPW2}.

The most important family of sequences from the point of view of achievement set theory is the family of multigeometric sequences, because they are relatively easy to handle computationally. A \textsl{multigeometric sequence} $(k_1,\ldots,k_m;,q)$ is a mixture of $m$ positive geometric sequences with the same ratio $q\in(0,1)$, that is, a sequence of the form $(k_1q,k_2q,\ldots, k_mq,k_1q^2,\ldots,k_mq^2,k_1q^3,\ldots)$. It is customary to arrange the main coefficients in non-increasing order: $k_1\ge k_2\ge\ldots\ge k_m$. There are a number of strong results concerning how the topological type of the achievement set of a multigeometric sequence depends on the main coefficients and on the ratio $q$ \cite{BFS},\cite{BBFS}. The first counterexample to the 1914 Kakeya hypothesis was provided by a multigeometric Cantorval \cite{WS}. The best-researched achievable Cantorval is the Guthrie-Nymann Cantorval $E(3,2;\frac14)$ \cite{GN88}, \cite{BPW},\cite{GM23}.

\section{ The central Cantor set range}

Recall the well-known correspondence between central Cantor sets (with left endpoint at 0) and fast convergent sequences. A \textsl{central Cantor set} is obtained by the following construction. At the initial stage, we remove from a closed interval $[0,y_0]$ (where $x_0>0$) a middle open interval $(y_1,x_1)$, just as in the standard geometric construction of the Cantor ternary set, where we remove the interval $(\frac13,\frac23)$ from $[0,1]$. The only requirement is that $0<x_1-y_1<y_0$. The remaining set, which is the union of two closed intervals of the same length, is denoted by $I_1$. The length of each of the two components of $I_1$ is $y_1$. In the second step of the construction, we remove a middle open interval from each of the components of $I_1$ in such a way that all the removed intervals have the same length, smaller than $y_1$. Denote the leftmost of these removed open intervals by $(y_2,x_2)$. In the standard geometric construction of the Cantor ternary set, it would be $(y_2,x_2)=(\frac19,\frac29)$. Proceeding inductively, in the $n$-th step of the construction, from each of the $2^{n-1}$ component intervals of $I_{n-1}$ we remove the middle open interval of length $x_n-y_n$, where $0<x_n-y_n<y_{n-1}$. $y_{n-1}$ is the length of each component of $I_{n-1}$, and $(y_n,x_n)$ is the open interval removed from the leftmost component of $I_{n-1}$. The set $\bigcap_{n\in\mathbb N} I_n$, where the iterations $I_n$ are constructed according to the procedure just described, is called a \textsl{central Cantor set}. Clearly, it is a Cantor set. The adjective central aptly reflects its nature, since all open intervals removed at all stages of its construction are concentric with the appropriate components of the preceding iterations (that is, they are central with respect to the appropriate components). Moreover, each central Cantor set is the achievement set of the series $\sum x_n$, where $x_n$'s are the right endpoints of the leftmost open intervals removed in the $n$-th step of the construction. The series $\sum x_n$ is fast convergent, and its remainders $r_n$ are exactly the left endpoints of the leftmost open intervals removed in the $n$-th step of the construction, that is, $r_n=y_n$ for all $n$.

Conversely, for any fast convergent series $\sum x_n$, its achievement set $E(x_n)$ is a central Cantor set. In particular, every central Cantor set (with left endpoint at 0) has a unique fast convergent representation $(x_n)$ described above. However, not every central Cantor set is uniquely achievable (see \cite[Example 4.6]{BBFP}).

We are now going to give a very simple characterization of the central Cantor sets that are uniquely determined. The crucial tool in the proof will be an inconspicuous but powerful theorem concerning centers of distances of achievement sets of fast convergent sequences \cite[Theorem 2]{B23}. The center of distances is a metric invariant introduced by Bielas, Plewik and Walczyńska in \cite{BPW} as a tool for detecting non-achievability. Given a metric space $(X,d)$, the center of distances of a non-empty set $S\subset X$ is
$$
\mathcal{C}(S)\ :=\ \bigl\{\alpha\ge0:\quad \forall x\in S\ \exists\ y\in S\quad d(x,y)=\alpha,\bigr\}.
$$
Clearly, $0\in\mathcal{C}(S)\subset[0,+\infty)$. Given a point $x\in X$ and a subset $A\subset X$, we define the set of distances from $x$ to the points of $A$ by
$
D_x(A):=\{d(x,y):\ y\in A\}.
$
Then the center of distances of $A$ is given by
$
\mathcal{C}(A)=\bigcap_{x\in A}D_x(A).
$
For example, the center of distances of the Sierpiński carpet $SC\subset\mathbb R^2$ is $\mathcal{C}(SC)=[0,\frac56]$. To see this, let us first prove the following simple observation. The symbol $\textnormal{Fr}\,A$ stands for the boundary of a set $A$.

\begin{proposition}
\label{pC}
Let $A$ be a compact, path-connected subset of a Euclidean space $(\mathbb R^n,d)$. Then
$$
\mathcal{C}(A)\ =\ [0,\alpha] \qquad \text{where}\qquad \alpha\ =\ \min_{x\in A} \left(\max_{y\in\textnormal{Fr}\,A}d(x,y)\right).
$$
\end{proposition}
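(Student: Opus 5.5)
The plan is to compute each distance set $D_x(A)$ explicitly as a closed interval starting at $0$, and then intersect over $x\in A$.

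First, fix $x\in A$ and consider the continuous map $f_x\colon A\to[0,+\infty)$, $f_x(y)=d(x,y)$. Since $A$ is path-connected and compact, $D_x(A)=f_x(A)$ is a connected compact subset of $\mathbb R$, i.e.\ a closed interval. It contains $0=f_x(x)$, so
$$D_x(A)=[0,m(x)],\qquad m(x):=\max_{y\in A}d(x,y).$$

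Second, I show $m(x)=\max_{y\in\textnormal{Fr}\,A}d(x,y)$. Since $A$ is closed, $\textnormal{Fr}\,A\subset A$, which gives ``$\ge$''. The set $\textnormal{Fr}\,A$ is nonempty, because $A$ is a nonempty bounded subset of $\mathbb R^n$ and so cannot be clopen. It is also compact, so the maximum over it is attained. For ``$\le$'', let $y\in A$ be a point with $d(x,y)=m(x)$. If $y=x$, then $m(x)=0$, so $A=\{x\}=\textnormal{Fr}\,A$ and the claim is trivial. If $y\neq x$ and $y$ were interior to $A$, then $y+\varepsilon(y-x)\in A$ for some small $\varepsilon>0$. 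This point lies at distance $(1+\varepsilon)d(x,y)>m(x)$ from $x$, a contradiction. Hence $y\in\textnormal{Fr}\,A$. This step is the only place where the Euclidean structure is used, through straight-line extension, and it is the main point to get right.

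Finally,
$$\mathcal C(A)=\bigcap_{x\in A}D_x(A)=\bigcap_{x\in A}[0,m(x)]=\Bigl[0,\inf_{x\in A}m(x)\Bigr].$$
The function $m$ is $1$-Lipschitz, since $|d(x,y)-d(x',y)|\le d(x,x')$ for every $y$ and taking maxima preserves this. So $m$ is continuous on the compact set $A$, and the infimum is a minimum. Combined with the second step, this equals $\alpha$, which gives $\mathcal C(A)=[0,\alpha]$.
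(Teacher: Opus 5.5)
Your proof is correct and follows the same route as the paper. You identify each $D_x(A)$ as the interval $[0,\max_{y\in A}d(x,y)]$ by connectedness, replace the maximum over $A$ with the maximum over $\textnormal{Fr}\,A$ using the Euclidean structure, and intersect over $x\in A$. You also supply two details the paper leaves implicit: the radial-extension argument that a farthest point lies on the boundary, and the $1$-Lipschitz continuity of $x\mapsto\max_{y\in A}d(x,y)$, which guarantees the minimum defining $\alpha$ is attained.
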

\begin{proof})
Let $\gamma$ be an arc in $A$ connecting two points $x,y\in A$. Since the function $\gamma\ni p\mapsto d(x,p)\in[0,+\infty)$ is continuous, the
Darboux property gives $[0,d(x,y)]\subset D_x(A)$. Since we deal with a Euclidean metric, $\max_{y\in A}d(x,y)= \max_{x\in\textnormal{Fr}\,
A}d(x,y)$ and hence $D_x(A)=[0, \max_{x\in\textnormal{Fr}\,A}d(x,y)]$ for every $x\in A$. Therefore, $\mathcal{C}(A)=\bigcap_{x\in A}D_x(A)=[0,\alpha]$.
\end{proof}

In the Sierpi\'{n}ski carpet $SC$, there are exactly four points $x$ at which the minimum in Proposition \ref{pC} is attained. One of them is $(\frac23,\frac12)$, and hence
$\alpha=d\bigl((\frac23,\frac12),(0,0)\bigr)=\frac56$. Thus, $\mathcal{C}(SC)=[0,\frac56]$, by Proposition \ref{pC}, which answers a question of Bi\'{s} \cite[p.20]{KN24}. Incidentally, this also illustrates a weakness of the concept of the center of distances in the multidimensional case, since we cannot deduce the non-achievability of the set $SC$. This provides one more argument in favor of the concept of a spectre of a set, since the non-achievability of $SC$ follows easily from the shape of its spectre (see \cite[Ex. 10]{KN24}).

The concept of the center of distances was studied further in \cite{BFHLP} and \cite{K25}. Centers of distances of sets of P-sums were investigated in \cite{GM}. Centers of distances in the case of ultrametric spaces were discussed in \cite{DR26} and \cite{DRarXiv}. The usefulness of centers of distances in recognizing the non-achievability of a set follows from the following observation \cite[Prop. 3.1]{BPW}.

\begin{proposition}
\label{p1}
For any sequence $(x_n)$ of positive terms,
$$
\{0\}\cup\{x_n:\ n\in\mathbb N\,\}\ \subset\ \mathcal{C}(E(x_n)).
$$
\end{proposition}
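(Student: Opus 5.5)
The plan is to verify the defining condition of $\mathcal{C}(E)$ directly for each candidate value, where $E:=E(x_n)$. Since $0$ always lies in $\mathcal{C}(S)$ for any nonempty $S$ (take $y=x$), only the values $x_n$ need checking. So we fix $n\in\mathbb N$ and an arbitrary point $e\in E$, and produce a point $e'\in E$ with $|e-e'|=x_n$.

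First, write $e=\sum_{k\in A}x_k$ for some $A\subset\mathbb N$; this is possible by the definition of the achievement set. Since the sequence is summable, all sums in sight converge, and we may freely add or remove a single index. We split into two cases according to whether $n\in A$.

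If $n\notin A$, put $A':=A\cup\{n\}$ and $e':=\sum_{k\in A'}x_k=e+x_n$. If $n\in A$, put $A':=A\setminus\{n\}$ and $e':=e-x_n$. In both cases $e'\in E$ and $d(e,e')=|e-e'|=x_n$. Hence $x_n\in D_e(E)$ for every $e\in E$, that is, $x_n\in\bigcap_{e\in E}D_e(E)=\mathcal{C}(E)$.

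There is no real obstacle here. The only point needing care is that the index set $A$ representing $e$ need not be unique when terms repeat or when subsums coincide. This does not matter, because the argument only requires one representation of $e$, and it does not use monotonicity of $(x_n)$ at all. This is consistent with the statement's hypothesis of merely positive terms.
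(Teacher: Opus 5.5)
Your proof is correct: toggling the single index $n$ in a representing set $A$ of $e$ gives a point $e\pm x_n\in E(x_n)$ at distance $x_n$ from $e$, and $0\in\mathcal{C}(S)$ holds trivially. The paper gives no proof of this proposition and only quotes it from \cite[Prop.~3.1]{BPW}; your toggling argument is the standard proof of that fact.
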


In fact, equality may occur in Proposition \ref{p1}, as happens, for example, for geometric sequences $(q^n)$ with $q\in(0,\frac12)$ \cite[Theorem 3.3]{BPW}. These geometric sequences are fast convergent and, in light of Proposition \ref{p1}, have minimal possible centers of distances. On the other hand, there are fast convergent sequences that do not have minimal possible centers of distances \cite[Example 2.7]{BBFP}. The problem of minimality of the center of distances for achievement sets of fast convergent sequences, studied in \cite{BBFP}, was eventually solved by Banakiewicz \cite[Theorem 2]{B23}.

\begin{BT}
Let $E=E(x_n)$ be a central Cantor set given by a fast convergent sequence $(x_n)$. The center of distances of $E$ is not minimal if and only if there exists a positive integer $n$ such that one of the following possibilities holds:
\begin{itemize}
\item[(i)]\ $x_{n-2}=4x_n$ and $ x_{n-1}=2x_n$;
\item[(ii)]\ $x_{n-3}=9x_n$, $x_{n-2}=5x_n$ and $x_{n-1}=2x_n$;
\item[(iii)]\ $x_{n-3}=10x_n$, $x_{n-2}=6x_n$ and $x_{n-1}=2x_n$.
\end{itemize}
\end{BT}

In the course of the proof of our result, we will use one more observation \cite[Lemma 2.5]{BGM18}.

\begin{proposition}
\label{p2}
If $x_p=x_{p+1}=\ldots=x_{p+2j-2}$ for some $p,j$, then $jx_p\in\mathcal{C}(E(x_n))$.
\end{proposition}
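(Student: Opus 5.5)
The plan is to verify the definition of the center of distances directly. Fix an arbitrary point $e\in E(x_n)$. We must produce a point $e'\in E(x_n)$ with $|e-e'|=jx_p$. Choose one representation $e=\sum_{n\in A}x_n$ with $A\subset\mathbb N$. If $e$ has several representations, any one of them will do, since we only need a single witness $e'$.

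Let $B:=\{p,p+1,\ldots,p+2j-2\}$. This is a block of $2j-1$ consecutive indices, and by hypothesis all the terms $x_n$ with $n\in B$ equal $x_p$. Put $k:=|A\cap B|$, so that $|B\setminus A|=2j-1-k$. The key observation is that $k$ and $2j-1-k$ are two nonnegative integers summing to $2j-1$, so at least one of them is $\ge j$. We split into two cases accordingly.

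If $k\ge j$, pick $j$ indices $n_1,\ldots,n_j\in A\cap B$ and set $A':=A\setminus\{n_1,\ldots,n_j\}$. Then
$$
e':=\sum_{n\in A'}x_n=e-jx_p
$$
belongs to $E(x_n)$. If instead $k\le j-1$, then $|B\setminus A|\ge j$. Pick $j$ indices $m_1,\ldots,m_j\in B\setminus A$ and set $A':=A\cup\{m_1,\ldots,m_j\}$. Then
$$
e':=\sum_{n\in A'}x_n=e+jx_p
$$
belongs to $E(x_n)$.

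In both cases $|e-e'|=jx_p$, so $jx_p\in D_e(E(x_n))$ for every $e\in E(x_n)$. Hence
$$
jx_p\in\bigcap_{e\in E(x_n)}D_e\bigl(E(x_n)\bigr)=\mathcal{C}\bigl(E(x_n)\bigr).
$$

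There is no real obstacle here. The only point requiring care is the pigeonhole step: it is the odd length $2j-1$ of the block of equal terms that guarantees one of the two cases always applies. The case $j=1$ recovers the trivial instance $x_p\in\mathcal C(E(x_n))$ of Proposition \ref{p1}.
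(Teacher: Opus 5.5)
Your proof is correct and complete. For each $e$ you fix one representation; the $2j-1$ indices with value $x_p$ then split into those used and those unused, and one part has at least $j$ elements. Hence one of $e-jx_p$, $e+jx_p$ lies in $E(x_n)$.

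The paper gives no proof of this statement; it quotes it from Bartoszewicz, G\l\c{a}b and Marchwicki (Lemma 2.5). Your add-or-remove argument is the natural one, since it extends the proof of Proposition \ref{p1}, which is the case $j=1$.
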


We are now ready to state the main result of this note.

\begin{theorem}
\label{t4}
Let $E$ be a central Cantor set with $\textnormal{min}\,E=0$ and let $(x_n)$ be the unique fast convergent representation of $E$. Then $E$ is uniquely achievable if and only if $x_n\ne 2x_{n+1}$ for all $n\in\mathbb N$.
\end{theorem}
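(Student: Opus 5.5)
Both directions will use the fact that the achievement set determines its first terms through the dominating gaps and the center of distances.

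\emph{Necessity.} Suppose $x_n=2x_{n+1}$ for some $n$. We build a second representation by replacing the single term $x_n$ with two copies of $x_{n+1}$. Put
$$(y_i)=(x_1,\ldots,x_{n-1},x_{n+1},x_{n+1},x_{n+1},x_{n+2},\ldots),$$
so that $x_{n+1}$ now appears three times. The set $\{0,x_n\}+\{0,x_{n+1}\}=\{0,x_{n+1},2x_{n+1},3x_{n+1}\}$ is exactly the set of subsums of three copies of $x_{n+1}$. Hence $E^{n+1}_n(x)=E^{n+2}_n(y)$, and therefore $E(x_n)=E(y_i)$. The sequence $(y_i)$ is non-increasing and differs from $(x_n)$, so $E$ is not uniquely achievable. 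This is essentially the example given in the introduction.

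\emph{Sufficiency.} Assume $x_n\ne 2x_{n+1}$ for all $n$, and let $E=E(y_i)$ with $(y_i)$ non-increasing. We show by induction on $n$ that $y_i=x_i$ for all $i\le n$.

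The idea is to recover both sequences from $E$ alone.
\begin{itemize}
\item Since $E$ is a central Cantor set, its principal gaps $(r_k,x_k)$ are exactly its dominating gaps, in increasing order of position. Domination is intrinsic to $E$.
\item For the representation $(y_i)$, the Third Gap Lemma says every dominating gap is principal for $(y_i)$. So each $x_k$ equals $y_{m_k}$ for some index $m_k$ with $r^y_{m_k}<y_{m_k}$.
\item The terms $y_i$ with $m_{k-1}<i<m_k$ must satisfy $y_i\ge y_{m_k}=x_k$. Their total, together with $x_k$, is determined by matching sums. Since $\max E=\sum x=\sum y$, and the remainders satisfy $r^y_{m_k}=r_k$, we get
$$\sum_{m_{k-1}<i\le m_k}y_i \;=\; r_{k-1}-r_k \;=\; x_k .$$
\item So the block of $(y_i)$ between consecutive principal indices consists of terms that are all $\ge x_k$ and sum to $x_k$. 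Hence the block is the single term $x_k$, unless the block structure of the two representations fails to align.
\end{itemize}

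This alignment is the delicate point and the main obstacle. The danger is that some $x_k$ is not a principal term of $(y_i)$ at all, and is instead built from repeated smaller $y$'s, as in the counterexample. In that scenario, $x_k$ is split into several equal terms $y$ of a common value $a$. Semi-fast-type structure and Proposition~\ref{p2} then put the multiples $ja$ into $\mathcal C(E)$.

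We then compare with the Banakiewicz Theorem. If the center of distances is minimal, then $\mathcal C(E)=\{0\}\cup\{x_n\}$. So these multiples must coincide with values $x_m$, which forces a relation $x_m=2x_{m+1}$ (the case $x_k=2\cdot x_{k+1}$ when $a=x_{k+1}$), contradicting the hypothesis.

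If the center is not minimal, one of the patterns (i)–(iii) of the Banakiewicz Theorem occurs. Each of these patterns contains $x_{n-1}=2x_n$, again contradicting the hypothesis.

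Thus the center of distances is minimal, no term of $(x_n)$ can be split, and the induction gives $(y_i)=(x_n)$.

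I expect the hardest part to be turning the "split" scenario rigorously into an element of $\mathcal C(E)$ outside $\{0\}\cup\{x_n\}$. The first thing to try is to look at the first index where $(y_i)$ and $(x_n)$ differ. At that index $y_n<x_n$ and $y_n=y_{n+1}$ must hold, since the gap $(r_n,x_n)$ must still be a gap of $E(y)$. Then, using $2y_n\le x_n$ and the gap structure, show that either $2y_n\in\mathcal C(E)$ is a new distance, or $y_n=x_{n+1}$ with $x_n=2x_{n+1}$.
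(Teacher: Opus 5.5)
Your necessity argument is correct and is exactly the paper's construction. Disposing of the non-minimal case via the Banakiewicz Theorem is also what the paper does. The gap is in the minimal case, where all the work lies, and there your argument is a plan resting on two false steps.

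First, principal gaps of a central Cantor set need not be dominating, since $(x_k-r_k)-(x_{k+1}-r_{k+1})=x_k-2x_{k+1}$ may be negative. Take $x=(3,2,\tfrac13,\tfrac19,\dots)$, which satisfies your hypothesis. Its principal gap $(r_1,x_1)=(\tfrac52,3)$ is shorter than the gap $(r_2,x_2)=(\tfrac12,2)$ lying to its left. So the Third Gap Lemma does not yield $x_1\in\{y_i\}$, and your block/alignment argument has no foundation.

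Second, nothing justifies that an omitted $x_k$ is ``split into equal terms'', and the route you suggest for it fails. At the first discrepancy one need not have $y_n=y_{n+1}$. For $x=(4,2,1,\tfrac12,\tfrac16,\dots)$ and $y=(3,2,1,1,\tfrac12,\tfrac16,\dots)$ one has $E(x_i)=E(y_i)$ and the gap $(r_1,x_1)$ survives, yet $y_1>y_2$. Your inference never uses the hypothesis, so it cannot be valid as stated. Moreover, two equal terms give nothing new: Proposition~\ref{p2} needs $2j-1=3$ copies to put $2a$ into $\mathcal C(E)$.

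The missing idea is a sum-counting argument that forces a value to occur three times. In the minimal case, Proposition~\ref{p1} gives $\{y_i\}\subset\mathcal C(E)=\{0\}\cup\{x_n\}$, so every $y_i$ equals some $x_j$. Let $m$ be the least index such that $x_m$ does not occur in $(y_i)$, and let $l$ be the least index such that $x_l$ occurs at least twice. Comparing $\sum y_i=\sum x_i$ with the inequalities $x_j>r_j$ shows that both indices exist and that $m<l$. Hence $y_i=x_i$ for $i<m$, and all later $y_i$ are at most $x_{m+1}$. If no value occurred three times, then $\sum y_i\le\sum_{i<m}x_i+2r_m<\sum_{i<m}x_i+x_m+r_m=\sum x_i$, a contradiction. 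So some $x_p$ occurs at least three times, and Proposition~\ref{p2} gives $2x_p\in\mathcal C(E)$. Minimality then gives $2x_p=x_k$ with $k<p$, and fast convergence forces $k=p-1$. This replaces your whole induction-on-blocks scheme.
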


\begin{proof}
We will prove the following equivalent formulation of the statement: $E$ is not uniquely representable if and only if $x_n=2x_{n+1}$ for some $n\in\mathbb N$.

If $n$ is an index such that $x_n=2x_{n+1}$, then, defining
$$
y_k\ :=\ \begin{cases} x_k\qquad&\text{for $k<n$},\\
x_{n+1} &\text{for $k=n,n+1, n+2$},\\
x_{k-1} &\text{for $k\ge n+3$},
\end{cases}
$$
we obtain $(y_n)\ne(x_n)$ and $E(x_n)=E(y_n)$, which means that $E$ is not uniquely achievable.

If $E$ is not uniquely representable and its center of distances $\mathcal{C}(E)$ is not minimal, then $x_n=2x_{n+1}$, by the Banakiewicz Theorem. It remains to consider the case when $E$ is not uniquely representable and $\mathcal{C}(E)$ is minimal. Let $(y_n)$ be a non-fast convergent representation of $E$. In particular, $(x_n)\ne(y_n)$. Since $E=E(y_n)$ and
$$
\bigl\{y_n:\ n\in\mathbb N\,\bigr\}\ \subset \ \mathcal{C}(E)\ =\ \{0\}\,\cup\,\bigl\{x_n:\ n\in\mathbb N\,\bigr\},
$$
by Proposition \ref{p1}, we must have $\{x_n:\,n\in\mathbb N\,\}\setminus\{y_n:\,n\in\mathbb N\,\}\ne\emptyset.$ Thus, $(y_n)$ takes values only in $\{x_n:\,n\in\mathbb N\,\}$, and at least one value must be repeated in the sequence $(y_n)$. Moreover, not all values of $(x_n)$ occur as values of $(y_n)$, since $E(x_n)=E(y_n)$.

If $m$ is the smallest index among the $(x_n)$-terms omitted by the sequence $(y_n)$, that is, $m=\min\{k:\,\forall\,n\ x_k\ne y_n\,\}$, and if $l$ denotes the smallest index among the $(x_n)$-terms repeated in the sequence $(y_n)$, that is, $l=\min\{k:\,\exists\,n\ x_k=y_n=y_{n+1}\,\}$, then $m<l$, since $(x_n)$ is fast convergent and $\sum x_n=\sum y_n$.

Suppose, in addition, that no value $x_k$ is repeated in $(y_n)$ more than two times. Then
$$
\sum_{n=1}^\infty y_n\ =\ \sum_{n=1}^{m-1}y_n\ +\ \sum_{y_n\le x_{m+1}}y_n.
$$
Hence
$$
\sum_{n=1}^\infty y_n\ \le\ \sum_{n=1}^{m-1}x_n\,+\,\sum_{n=m+1}^\infty 2x_n\,=\,\sum_{n=1}^\infty x_n\,-\,x_m\,+\sum_{n=m+1}^\infty x_n \ <\ \sum_{n=1}^\infty x_n,
$$
by the fast convergence of $(x_n)$, a contradiction with $E(x_n)=E(y_n)$.

Thus, at least one value $x_p$ must appear at least three times in the sequence $(y_n)$, but then $2x_p\in\mathcal{C}(E)$, by Proposition \ref{p2}. Since $\mathcal{C}(E)$ is minimal, $2x_p=x_k$ for some $k<p$. Since $(x_n)$ is fast convergent, we must have $k=p-1$, that is, $x_{p-1}=2x_p$, which completes the proof.
\end{proof}

\section{The symmetric Cantor set range}

We now extend the above result to a larger class of symmetric Cantor sets, which have been studied on numerous occasions (see, for example, \cite{Feng97}, \cite{Kong24}, and \cite{Lu}). We begin with the standard geometric construction of a general symmetric Cantor set. Let $(n_k)$ be a sequence of integers with $n_k \ge 2$ and $(c_k)$ a sequence of positive numbers with $n_k c_k < 1$. Let $I_0$ be a closed interval $[a,b]$ and define $n_0:=1$. Suppose that $I_k$ consists of $n_k$ closed intervals $I_{k,i}$ of equal length. Define $I_{k+1}$ by removing $n_{k+1}-1$ open intervals of equal length from each $I_{k,i}$, leaving $n_{k+1}$ closed intervals, each of length $c_{k+1} |I_{k,i}|$. Then
$$
C_{(n_k),(c_k)} := \bigcap_{k=0}^\infty I_k = \bigcap_{k=0}^\infty \bigsqcup_{i=1}^{i_k} I_{k,i}
$$
is a \textsl{symmetric Cantor set}. Central Cantor sets form a subfamily of symmetric Cantor sets. Specifically, given a central Cantor set $C_{(a_n)}$ with ratios of dissection $(a_n) \in (0,1)^\mathbb{N}$, we have $C_{(a_n)} = C_{(2), (\frac{1 - a_k}{2})}$. A symmetric Cantor set $A$ with $\min A = 0$ corresponds to a semi-fast convergent sequence $(\alpha_k; N_k)$, i.e., $A = E(\alpha_k; N_k)$ \cite[pp. 1526-1527]{BFPW2}. For a symmetric Cantor set $C_{(n_k),(c_k)}$ with fundamental interval $[0,\eta]$, we have $N_k = n_k - 1$ and $\alpha_k = \eta \prod_{i=1}^k c_i$.

A value $\alpha_n$ of a semi-fast convergent sequence $(\alpha_n;N_n)$ is said to be irreducible if $\alpha_n\ne(N_{n+1}+1)\alpha_{n+1}$. Otherwise, we say that $\alpha_n$ is reducible. A semi-fast convergent sequence is called irreducible if all its values are irreducible
\cite[p. 20]{BBFP}. Every element of the center of distances of $E(\alpha_n;N_n)$ is a sum of finitely many terms of the semi-fast convergent sequence \cite[Lemmma 3.1]{BBFP}. Moreover, every semi-fast convergent sequence has infinitely many irreducible values.

It is not difficult to see that if $\alpha_i$ is reducible and $N_i=1$, then
$$
E(\alpha_n;N_n)\ =\ E(\alpha_1,\cdots, \alpha_{i-1}, \underbrace{\alpha_{i+1},\ldots,\alpha_{i+1}}_{2N_{i+1}+1}, \underbrace{\alpha_{i+2},\ldots,\alpha_{i+2}}_{N_{i+2}},\ldots).
$$
Similarly, if $\alpha_i$ is reducible and $N_i>1$, then
$$
E(\alpha_n;N_n)\ =\ E(\alpha_1,\cdots, \alpha_{i-1}, \underbrace{\alpha_i,\ldots,\alpha_i}_{N_i-1},\underbrace{\alpha_{i+1},\ldots,\alpha_{i+1}}_{2N_{i+1}+1}, \alpha_{i+2},\ldots).
$$
Thus, if $(\alpha_n;N_n)$ is not irreducible, then $E(\alpha_n;N_n)$ is not uniquely achievable.

\begin{theorem}
\label{t5}
Let $E=E(\alpha_n;M_n)$ be a symmetric Cantor set. $E$ is uniquely achievable if and only if $(\alpha_n;M_n)_{n\in\mathbb N}$ is irreducible and $M_n\le2$ for all $n$.
\end{theorem}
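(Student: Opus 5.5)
We prove both implications, following the statement of Theorem \ref{t5}. For necessity, the displayed identities preceding the theorem already show that a reducible value spoils uniqueness. So it remains to treat the case of some $M_i\ge3$. Put $a=\alpha_i$ and take three of its copies. We claim that replacing three copies of $a$ (three terms of the sequence) by the pair $\{2a,a\}$ gives the same achievement set. Indeed, $\{0,a,2a,3a\}$ is exactly the set of subsums of both $(a,a,a)$ and $(2a,a)$. The achievement set is the sum of the finite set of subsums of a chosen block and the achievement set of the remaining terms, so it is unchanged. Rearranging into non-increasing order gives a monotone sequence different from $(\alpha_n;M_n)$, so $E$ is not uniquely achievable. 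This settles necessity.

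For sufficiency, assume $(\alpha_n;M_n)$ is irreducible with $M_n\le2$, and let $E=E(y_n)$ for some monotone $(y_n)$. The plan is to identify the values and multiplicities of $(y_n)$ step by step, peeling off one level at a time.

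\emph{Top level.} We have $\max E=\sum y_n=\sum\alpha_pM_p$. Let $G$ be the leftmost dominating gap of $E$. Its right endpoint $\alpha_1$ is intrinsic to the compact set $E$, since domination does not depend on the representation. By the Third Gap Lemma, $G$ is principal for $(y_n)$, i.e.\ $G=(r^y_k,y_k)$ for some $k$. Hence $y_k=\alpha_1$ and $r^y_k=R_1$.

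\emph{Counting.} The interval $[0,R_1]$ (the first component of the first iterate) is a gap-free initial block of structure in both representations. I expect to show $y_1=\dots=y_k=\alpha_1$: if some $y_j>\alpha_1$ with $j<k$, then by the First Gap Lemma $y_j\le r^y_{j}$, so an earlier sum reaches beyond the gap, contradicting $\max E$. Comparing total sums then forces $k\alpha_1=M_1\alpha_1$, so $k=M_1$. This pins down the first level of $(y_n)$.

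\emph{Induction.} The remaining sequence $(y_n)_{n>k}$ has achievement set $E\cap[0,R_1]=E(\alpha_p;M_p)_{p\ge2}$, again a symmetric Cantor set with the same hypotheses. Induction then yields $(y_n)=(\alpha_p;M_p)$.

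The main obstacle is the counting step: a non-fast representation may legitimately have terms strictly larger than $R_1$ that do not produce a gap, such as $y=\alpha_1$ appearing with a gap of $(y_n)$ hidden by overlaps, or values not in $\{\alpha_p\}$. This is precisely where the hypotheses must enter.

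To control the values, first use the fact, quoted earlier from \cite{BBFP}, that every element of $\mathcal{C}(E)$ is a finite sum of terms. Combined with Proposition \ref{p1}, this gives that each $y_n$ is a finite sum of $\alpha$'s. I would then try to show that $\mathcal{C}(E)\cap(R_{p},\alpha_{p}]$ contains only values of the form $j\alpha_p$ with $j\le M_p$, plus possibly $(M_{p+1}+1)\alpha_{p+1}$. The last value is excluded by irreducibility.

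The multiplicity bound $M_p\le2$ is what prevents $2\alpha_p$ from appearing in $\mathcal{C}(E)$. By Proposition \ref{p2}, three equal terms would put $2\alpha_p$ in the center, and the top-level argument must rule this out. Here I would adapt the sum-comparison used in the proof of Theorem \ref{t4}: if every value of $(y_n)$ is some $\alpha_p$ with multiplicity at most $M_p$, and some value is missing or deficient, the total sum is too small by semi-fast convergence. This forces equality of multiplicities level by level.
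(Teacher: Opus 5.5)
Your necessity argument is correct and is the paper's: reducible values are handled by the displayed identities, and three copies of $a$ are traded for $2a,a$. The sufficiency part, however, fails at its first step, and the missing piece is the main content of the paper's proof.

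\textbf{The top-level step.} There is no ``leftmost dominating gap''. The gaps $(R_p,\alpha_p)$ accumulate at $0$, so $E$ has infinitely many dominating gaps converging to $0$. What your argument really needs is that $(R_1,\alpha_1)$ is dominating, so that the Third Gap Lemma makes it principal for $(y_n)$. This is false in general. Take $\alpha_1=1$, $\alpha_2=\tfrac45$, $M_1=M_2=1$, and any admissible irreducible tail with $R_2=\tfrac{19}{100}$. Then $(R_1,\alpha_1)=(\tfrac{99}{100},1)$ has length $\tfrac1{100}$, while $(R_2,\alpha_2)$, lying to its left, has length $\tfrac{61}{100}$. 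So nothing tells you that $\alpha_1$ is a term of $(y_n)$ with $r^y_k=R_1$.

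The only gap you can anchor on in an arbitrary representation is the longest dominating one, $(R_n,\alpha_n)$, typically with $n>1$. This gives $x_k=\alpha_n$ and $\sum_{i\le k}x_i=\sum_{i\le m}y_i$. The real problem is then to recover all the terms \emph{above} that gap, and your top-down scheme never addresses it. Your counting step is also misargued, since the First Gap Lemma does not yield $y_j\le r^y_j$. That said, once $(R_1,\alpha_1)$ is known to be principal at index $k$, $k=M_1$ follows at once from the sum identity and monotonicity.

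\textbf{The center-of-distances fallback.} This does not close the gap either. Since $0\in E$, we have $\mathcal{C}(E)\subset D_0(E)=E$, so $\mathcal{C}(E)\cap(R_p,\alpha_p]\subset\{\alpha_p\}$ automatically, and your proposed description carries no information. What you would actually need is a minimality theorem for centers of distances of semi-fast convergent sequences. The paper explicitly says no such characterization is available and poses it as an open problem. Moreover, your final sum comparison presupposes that each value $\alpha_p$ occurs at most $M_p$ times in $(y_n)$, which you never derive. Proposition \ref{p2} says nothing when a value with $M_p=1$ occurs twice.

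\textbf{What the paper does.} The paper's Lemma \ref{l6} starts from the longest dominating gap and determines $x_{k-1},x_{k-2},\ldots$ by downward induction. At each step it compares $x_{k-l-1}+E_{k-l}(x_i)\subset E$ with the pseudointervals $e_j+E_m(y_i)$ of $E$. No translate of $E_m(y_i)$ contains a gap of length $\alpha_n-R_n$, and no gap of $E$ is longer than that. Hence these translates must coincide with $E$'s pseudointervals one by one. The remaining candidates are $x_{k-l-1}=x_{k-l}$, $x_{k-l-1}\in(\alpha_p,\alpha_p+R_p)$ and $x_{k-l-1}=2\alpha_p$. Each forces $\alpha_{p-1}=2\alpha_p$ with $M_p=1$, or $\alpha_{p-1}=3\alpha_p$ with $M_p=2$, contradicting irreducibility. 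The hypothesis $M_p\le2$ is what limits this to two cases. Theorem \ref{t5} then follows by applying the lemma successively along the dominating gaps.
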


Unfortunately, there is no characterization of the minimality of centers of distances of achievement sets of semi-fast convergent sequences. Although some partial results on centers of distances of symmetric Cantor sets were presented in \cite{BBFP}, they are not sufficient to extend the method used in the proof of Theorem \ref{t4} to a proof of Theorem  \ref{t5}. Therefore, we need to undertake a detailed geometric analysis of symmetric Cantor sets and make use of some fine properties of achievement sets. The core of the proof of Theorem \ref{t5} is then a simple induction; however, the inductive step relies on the following lemma, which requires a longer proof.

\begin{lemma}
\label{l6}
Let $E$ be a symmetric Cantor set with the left endpoint at 0 and let $(\alpha_p;M_p)_{p\in\mathbb N}$ be an irreducible semi-fast convergent sequence such that $E=E(\alpha_p;M_p)$ and $M_p\le 2$ for all $p$. Let $(R_n, \alpha_n)$ be the longest dominating gap of $E$. Then, for any sequence $(x_i)_{i\in\mathbb N}$ with positive and monotone terms such that $E=E(x_i)$, it must be
\begin{equation}
\label{eq1}
x_i\ =\ y_i \quad\text{for all \ $i=1,2,\ldots,\,\sum_{p=1}^nM_p$}
\end{equation}
where $(y_i)_{i\in\mathbb N}=(\alpha_p;M_p)_
{p\in\mathbb N}$; that is, $y_i=\alpha_p$ for $\sum_{j=1}^{p-1}M_j<i\le \sum_{j=1}^pM_j$ and $p\in\mathbb N$.
\end{lemma}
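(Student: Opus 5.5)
The plan is to read off the initial segment of $(x_i)$ from the geometry of $E$ near its left endpoint, using only representation-independent features: dominating gaps and the positions of points of $E$. Put $K:=\sum_{p=1}^nM_p$. Since $(R_n,\alpha_n)$ is the longest dominating gap, every $E$-gap to the left of it is shorter. It is principal with respect to $(y_i)$, and by the Third Gap Lemma it is also principal with respect to $(x_i)$. Hence there is an index $k$ with $(r^x_k,x_k)=(R_n,\alpha_n)$, so $x_k=\alpha_n$ and $r^x_k=R_n$. Since the sums agree, $\sum_{i\le k}x_i=\sum_{i\le K}y_i$. Thus it suffices to show that the multiset $\{x_1,\dots,x_k\}$ equals $\{y_1,\dots,y_K\}$; monotonicity then forces $k=K$ and \eqref{eq1}.

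Next I would compare the two decompositions $E=E_1^k(x)+E_k(x)$ and $E=E_1^K(y)+E_K(y)$. Both remainder achievement sets lie in $[0,R_n]$. Because every finite subsum $f\le x_k$ is followed by the gap $(f+R_n,f+\alpha_n)$ or lies in a shorter gap structure, I expect that $E\cap[0,R_n]=E_k(x)=E_K(y)$. Consequently both $E_1^k(x)$ and $E_1^K(y)$ coincide with the set of left endpoints of the connected "blocks" $E\cap[f,f+R_n]$ separated by gaps of length at least $\alpha_n-R_n$. This gives $F:=E_1^k(x)=E_1^K(y)$ as sets. For the $y$-side this uses semi-fast convergence: distinct subsums of $\alpha_1,\dots,\alpha_n$ with multiplicities $M_p\le2$ differ by at least $\alpha_n-R_n$ plus something, so the copies $f+[0,R_n]$ are disjoint.

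The remaining step is a finite problem: recover a multiset from its subset-sum set $F$, where $F$ is the subsum set of $(\alpha_p;M_p)_{p\le n}$ with $M_p\le2$ and $\alpha_p\ne(M_{p+1}+1)\alpha_{p+1}$. I would argue by downward induction along the values. The smallest positive element of $F$ is $\alpha_n$, so the smallest $x$-term below $x_k$ is handled, and we strip it off. To do so, observe that $F=E_1^{k-1}(x)+\{0,x_k\}$, and determine $E_1^{k-1}$ from $F$ via the leftmost block structure at scale $\alpha_n$. Multiplicity $M_n\in\{1,2\}$ is detected by whether $2\alpha_n\in F$ without coming from a larger value. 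Here irreducibility is used: $2\alpha_n$ or $3\alpha_n$ could otherwise equal $\alpha_{n-1}$ and be indistinguishable from a term, and that ambiguity is exactly the reducible case. The condition $M_p\le 2$ rules out $\alpha_p$ being repeated three times, where Proposition~\ref{p2} creates $2\alpha_p$ as a fake term. I would then iterate for $p=n-1,\dots,1$, each time using the gap structure of the partial set $E_1^{j}$.

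I expect the main obstacle to be this last combinatorial recovery. One has to exclude $x$-terms that are not values $\alpha_p$, such as $\alpha_{p}+\alpha_{p+1}$ or sums of smaller terms. The natural tool is an inequality of the type used in the proof of Theorem~\ref{t4}: if some $\alpha_p$ is omitted, the total sum falls short because $\alpha_p>\sum_{q>p}\alpha_qM_q$. If a term is repeated too often, a dominating-gap length changes. A careful case analysis of the first index where $(x_i)$ and $(y_i)$ differ, comparing the leftmost gap of $F$ that appears at that scale, should close the argument.
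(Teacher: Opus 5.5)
Your opening reduction is sound and matches how the paper starts. By the Third Gap Lemma, $x_k=\alpha_n$ and $r^x_k=R_n$. Hence $E_k(x)=E_K(y)=E\cap[0,R_n]=:P$ and $\sum_{i\le k}x_i=\sum_{i\le K}y_i$.

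The proof breaks at the next step, the claim that $E_1^k(x)$ and $E_1^K(y)$ both equal the set of left endpoints of blocks separated by gaps of length at least $\alpha_n-R_n$. The gaps between consecutive blocks $g+P$, $g\in E_1^K(y)$, are $E$-gaps of order at most $K$, with lengths $\alpha_q-R_q$ for $q\le n$. Choosing $(R_n,\alpha_n)$ as the longest dominating gap only gives $\alpha_q-R_q\le\alpha_n-R_n$, and strict inequality can occur.

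Take $y=(5,3,1,\tfrac14,\tfrac1{16},\ldots)$. It is fast convergent and irreducible, and its longest dominating gap is $(\tfrac43,3)$. So $n=K=2$ and $E_1^2(y)=\{0,3,5,8\}$. Yet the gap $(\tfrac{13}{3},5)$ between $3+P$ and $5+P$ has length $\tfrac23<\tfrac53$. Worse, $4+P=(4+E_3(y))\cup(5+E_3(y))\subset(3+P)\cup(5+P)\subset E$, although $4\notin E_1^2(y)$.

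So no criterion of the form ``$f+P$ is a block of $E$'' identifies $E_1^K(y)$, and nothing in your argument excludes, say, $x_{k-1}=4$. That value is ruled out only because $x_{k-1}+x_k=7$ falls into the gap $(\tfrac{19}{3},8)$, that is, by looking at a larger translate.

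Showing that the $x$-translates line up with the $y$-pseudointervals is exactly what the paper's proof is about. It runs a backward induction $x_{k-i}=y_{K-i}$. At each step, the whole translate $x_{k-l-1}+E_{k-l}(x)$, a union of $s$ pseudointervals with a rigid gap pattern, is forced onto natural pseudointervals (the equalities~\eqref{eq9}). This uses two facts: $P$ has no gap of length $\ge\alpha_n-R_n$, and, by the Observation, a short gap between consecutive pseudointervals is followed by one of length exactly $\alpha_n-R_n$.

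Your Step 3 is a programme rather than an argument, and the tool you propose for it does not apply. The counting inequality in the proof of Theorem~\ref{t4} needs the competing terms to take values only among the fast-convergent terms. There that came from minimality of $\mathcal{C}(E)$, and no such information is available for semi-fast sequences. The real danger is a term that is itself a sum of smaller values, such as $x_{k-l-1}=\alpha_p+e_d$ with $e_d\in E^K_{K-l+1}(y)$; this number even lies in $E_1^K(y)$. You name this difficulty but give no mechanism for it.

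The paper excludes such terms by comparing the gap patterns of the two translates $x_{k-l-1}+E_{k-l}(x)$ and $x_{k-l-1}+x_{k-l}+E_{k-l}(x)$. This forces $\alpha_{p-1}=2\alpha_p$ when $M_p=1$, or $\alpha_{p-1}=3\alpha_p$ in one subcase with $M_p=2$, contradicting irreducibility. The remaining possibility $x_{k-l-1}>\alpha_{p-1}$ is ruled out because $(R_{p-1},x_{k-l-1})$ would then be a gap containing $\alpha_{p-1}\in E$. Without a replacement for both the alignment step and this exclusion step, the proposal does not prove the lemma.
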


\begin{proof}
By the definition of a semi-fast convergent sequence, we have $\alpha_n=y_m$ for $m=\sum_{j=1}^nM_j$. Since $(R_n,\alpha_n)$ is a dominating gap of $E$, it follows from the Third Gap Lemma that $y_m=x_k$ for some $k\in\mathbb N$. Moreover, $E_k(x_i)=E_m(y_i)=E_n(\alpha_p;M_p)$. Hence
$
\sum_{i>k}x_i=\sum_{i>m}y_i=\sum_{p>n}\alpha_pM_p,
$
and, consequently,
\begin{equation}
\label{eq2}
\sum_{i=1}^kx_i\ = \sum_{i=1}^my_i\ .
\end{equation}

It suffices to show that
\begin{equation}
\label{eq3}
x_{k-i}\ =\ y_{m-i}\quad \text{for } i\in\mathbb N_0,\quad 0\le i\le m-1.
\end{equation}
Indeed, \eqref{eq3} implies that
$$
\sum_{i=1}^mx_i
=\sum_{i=0}^{m-1}x_{k-i}
\overset{\eqref{eq3}}{=}
\sum_{i=0}^{m-1}y_{m-i}
=\sum_{i=1}^m y_i
\overset{\eqref{eq2}}{=}
\sum_{i=1}^kx_i.
$$
Hence $k=m$, and \eqref{eq1} follows.

If $k=1$, then $m=1$, $M_1=1$, and \eqref{eq1} follows directly from \eqref{eq2}. Thus, we may restrict our attention to the case $k>1$.

We prove \eqref{eq1} inductively. Let $l\in\mathbb N_0$, $l<m-1$, be such that
$
x_{k-i}=y_{m-i}
$
for all $i\in\mathbb N_0$, $0\le i\le l$. Since $l<m-1$, we have $m-l\ge2$. Furthermore,
$$
\sum_{i=k-l}^kx_i
=\sum_{i=m-l}^m y_i
\le \sum_{i=2}^m y_i
<\sum_{i=1}^m y_i
\overset{\eqref{eq2}}{=}
\sum_{i=1}^k x_i,
$$
and hence $k-l\ge2$ as well. Moreover, our inductive hypothesis gives
\begin{equation}
\label{eq31}
r_{k-i}^x=r_{m-i}^y
\qquad \text{for all }0\le i\le l+1.
\end{equation}

We will show that \eqref{eq3} implies
\begin{equation}
\label{eq4}
x_{k-l-1}\ =\ y_{m-l-1}.
\end{equation}

Let $p\in\mathbb N$ be such that $y_{m-l}=\alpha_p$. We divide the proof of \eqref{eq4} into two cases, according to the value of $M_p$.

\textsl{Case 1}. Suppose that
\begin{equation}
\label{eq5}
M_p=1.
\end{equation}
Since $y_{m-l-1}\ge y_{m-l}=\alpha_p$ and $M_p=1$, we have $p\ge2$ and
$
y_{m-l-1}=\alpha_{p-1}.
$

First, we eliminate the possibility that $x_{k-l-1}=x_{k-l}$. Suppose, to the contrary, that $x_{k-l-1}=x_{k-l}$. Then
$
2x_{k-l}=x_{k-l-1}+x_{k-l}\in E,
$
while
$
x_{k-l}+x_{k-l-1}>\alpha_p+R_p \overset{\eqref{eq5}}{=}R_{p-1}.
$
Consequently,
$$
2x_{k-l}\ \ge\ \min\bigg\{s\in E:\quad s>R_{p-1}\bigg\}\ =\ \alpha_{p-1}.
$$
Suppose first that $2x_{k-l}>\alpha_{p-1}$. Then
$
2x_{k-l}+R_p>\alpha_{p-1}+R_p.
$
Since the interval $(\alpha_{p-1}+R_p,\alpha_{p-1}+\alpha_p)$ is a gap of $E$, it follows that
$
2x_{k-l}+R_p\ge \alpha_{p-1}+\alpha_p.
$
Indeed,
$$
2x_{k-l}+R_p\ =\ x_{k-l}+x_{k-l-1}+r^y_{m-l}\ \overset{\eqref{eq31}}{=}\ x_{k-l}+x_{k-l-1}+r^x_{k-l}\in E.
$$
By the induction hypothesis, we have $x_{k-l}=y_{m-l}=\alpha_p$. Hence
$
\alpha_p+R_p=x_{k-l}+R_p\ge\alpha_{p-1},
$
which is a contradiction. Therefore, the equality $x_{k-l-1}=x_{k-l}$ necessarily implies
$
2x_{k-l}=\alpha_{p-1}.
$
That is,
$
(M_p+1)\alpha_p=2\alpha_p=\alpha_{p-1},
$
which contradicts the irreducibility of $(\alpha_i;M_i)_{i\in\mathbb N}$.

Thus, in the case \eqref{eq5}, we must have $x_{k-l-1}>x_{k-l}$. Consequently, either
\begin{equation}
\label{eq6}
x_{k-l-1}\,\in\,(\alpha_p,\alpha_p+R_p)
\end{equation}
or
\begin{equation}
\label{eq7}
x_{k-l-1}\ \ge\ \alpha_{p-1}.
\end{equation}

Ruling out the possibility in \eqref{eq6} requires a more detailed understanding of the structure of $E$. We have
$$
E\cap\bigl[0,y_{k-l-1}+r^y_{k-l}\bigr]\ =\ E\cap\bigl[0, \alpha_{p-1}+R_p\bigr]\ =\ E_{m-l}(y_i)\sqcup\bigl(y_{m-l}+E_{m-l}(y_i)\bigr)\sqcup\bigl(y_{m-l-1}+E_{m-l}(y_i)\bigr)
$$
and
$$
E_{m-l}(y_i)\ =\ E^m_{m-l+1}\,+\, E_m(y_i).
$$
The finite set $E^m_{m-l+1}(y_i)$ consists of $s$ elements, where
$
s=\prod_{i=m-l+1}^m(M_i+1).
$
We arrange these elements in increasing order:
$
e_1<\cdots<e_s.
$
In particular, $e_1=0$ and $e_2=\alpha_n$.

\begin{figure}[h]

\begin{tikzpicture}[
    >=latex, scale=0.85,
    font=\small,
    brace/.style={decorate,decoration={brace,amplitude=5pt}},
    seg/.style={densely dotted,very thick}
]

%--------------------------------------------------
% Parametry
%--------------------------------------------------
\def\L{0.75}      % długość przedziału
\def\G{0.42}      % zwykła przerwa
\def\Gs{0.18}     % krótka przerwa (3--4)

%--------------------------------------------------
% Makro rysujące jeden blok
% #1 = przesunięcie w osi x
% #2 = napis nad klamrą
% #3 = napis pod pierwszym początkiem (opcjonalnie)
%--------------------------------------------------

\newcommand{\DrawBlock}[3]{%

\begin{scope}[xshift=#1 cm]

\coordinate (A1) at (0,0);
\coordinate (B1) at (\L,0);

\coordinate (A2) at (\L+\G,0);
\coordinate (B2) at (2*\L+\G,0);

\coordinate (A3) at (2*\L+2*\G,0);
\coordinate (B3) at (3*\L+2*\G,0);

\coordinate (A4) at (3*\L+2*\G+\Gs,0);
\coordinate (B4) at (4*\L+2*\G+\Gs,0);

\coordinate (A5) at (4*\L+3*\G+\Gs,0);
\coordinate (B5) at (5*\L+3*\G+\Gs,0);

\coordinate (A6) at (5*\L+4*\G+\Gs,0);
\coordinate (B6) at (6*\L+4*\G+\Gs,0);

\foreach \A/\B in {
A1/B1,A2/B2,A3/B3,A4/B4,A5/B5,A6/B6}
{
    \draw[seg] (\A)--(\B);
}

% górna klamra
\def\End{6*\L+4*\G+\Gs}

\draw[brace]
(0,0.4) -- (\End,0.4)
node[midway,above=6pt] {$#2$};

% podpis pod początkiem pierwszego przedziału
\ifx&#3&
\else
\draw[->]
(A1)+(0,-0.75)--(A1)+(0,-0.05)
node[below=20pt,right] {$#3$};
\fi

\end{scope}
}

%--------------------------------------------------
% Pierwszy blok
%--------------------------------------------------

\DrawBlock
{0}
{E_{m-l}(y_i)}
{}

% oznaczenia nad

\node[above=1pt] at (0,0) {$e_1$};
\node[above=1pt] at (\L+\G,0) {$e_2$};
\node[above=1pt] at (5*\L+4*\G+\Gs,0) {$e_s$};

% oznaczenia pod

\node[left] at (0,0) {$0$};

\draw[->]
(\L+\G,-0.72)--(\L+\G,-0.05)
node[below=20pt,right]
{$e_2=\alpha_n=y_m=x_k$};

\node[below=1pt]
at (6*\L+4*\G+\Gs,0)
{$R_p$};

% dolna klamra pierwszego przedziału

\draw[brace,decoration={brace,mirror,amplitude=5pt}]
(0,-0.1)--(\L,-0.1)
node[midway,below=2pt]
{$E_m(y_i)$};

%--------------------------------------------------
% Drugi blok
%--------------------------------------------------

\DrawBlock
{6.7}
{y_{m-l}+E_{m-l}(y_i)}
{\alpha_p=y_{m-l}=x_{k-l}}

%--------------------------------------------------
% Trzeci blok
%--------------------------------------------------

\DrawBlock
{13.2}
{y_{m-l-1}+E_{m-l}(y_i)}
{\alpha_{p-1}=y_{m-l-1}}

\end{tikzpicture}
\caption{Pseudointervals} \label{fig1}
\end{figure}

The dotted intervals on Figure \ref{fig1} represent translations of $E_m(y_i)$. Since they play a crucial role in the next part of the proof, it is convenient to introduce the following terminology: sets of the form $f+E_m(y_i)$ will be called \textsl{pseudointervals}. Figure \ref{fig1} depicts the case $s=6$ and $l=2$, which can occur, for example, when $M_n=2$, $M_{n+1}=1$, and $p=n+2$ (recall that we are considering the case \eqref{eq5}). Since the $E$-gap $(R_n,\alpha_n)$ is the longest dominating gap of $E$, all $E$-gaps contained in pseudointervals are shorter than $\alpha_n-R_n$. All other $E$-gaps of $E\cap\bigl[0,\alpha_{p-1}+R_p\bigr]$ (there are seventeen of them in the case depicted in Fig. 1) have length at most $\alpha_n-R_n$. The following observation provides another crucial ingredient for the next part of the proof.

\begin{obs}
Let $M:=\prod_{i=1}^nM_i$ and let $(L_j)_{j=1}^M$ be the sequence of all $E(y_i)$-gaps of order at most $n$, arranged in their natural order (that is, $\max L_j<\min L_{j+1}$). If the length of $L_j$ is smaller than $\alpha_n-R_n$, then necessarily $j<M$, and the length of $L_{j+1}$ is equal to $\alpha_n-R_n$.
\end{obs}

We are now ready to rule out the possibility that both \eqref{eq5} and \eqref{eq6} hold. By our inductive hypothesis,
\begin{equation}
\label{eq8}
E_{k-l}(x_i)\ =\ E_{m-l}(y_i)\ =\ \bigsqcup_{j=1}^s\bigl(e_j+E_m(y_i)\bigr).
\end{equation}
Moreover,
$$
E=E(x_i)\ \supset\ x_{k-l-1}+E_{k-l}(x_i)\ \overset{\eqref{eq8}}{=}\ \bigsqcup_{j=1}^s\bigl(x_{k-l-1}+e_j+E_m(y_i)\bigr).
$$
Denote the pseudointervals $x_{k-l-1}+e_j+E_m(y_i)$ by $\tilde{P}j$, for $j=1,\ldots,s$. Similarly, denote the basic pseudointervals $\alpha_p+e_j+E_m(y_i)$ by $P_j^{(1)}$, $j=1,\ldots,s$. Analogously, let
$P_j^{(2)}:=\alpha_{p-1}+e_j+E_m(y_i)$, $j=1,\ldots,s$, be the natural pseudointervals of $E\cap\bigl[\alpha_{p-1},\alpha_{p-1}+R_p\bigr]$, and let
$P_j^{(3)}:=\alpha_{p-1}+\alpha_p+e_j+E_m(y_i)$, $j=1,\ldots,s$, be the natural pseudointervals of $E\cap\bigl[\alpha_{p-1}+\alpha_p,\alpha_{p-1}+\alpha_p+R_p\bigr]$.

Suppose that \eqref{eq5} and \eqref{eq6} hold. We first observe that
$
x_{k-l-1}=\min\tilde{P}_1\ge\alpha_p+\alpha_n=y_{m-l}+y_m.
$
Indeed, otherwise we would have $\min\tilde{P}_1\in(\alpha_p,\alpha_p+R_p)$, since $(\alpha_p+R_p,\alpha_p+\alpha_n)$ is an $E$-gap and \eqref{eq6} holds. Clearly, $\max\tilde{P}_1\in E$. Since $(\alpha_p+R_n,\alpha_p+\alpha_n)$ is an $E$-gap, it follows that $\max\tilde{P}_1\in P_2^{(1)}$. Because $\tilde{P}_1\subset E$, there is a $\tilde{P}_1$-gap containing the $E$-gap $(\alpha_p+R_n,\alpha_p+\alpha_n)$. On the other hand, $\tilde{P}_1$ is a translation of $E_m(y_i)$, and hence $E_m(y_i)$ contains a gap of length at least $\alpha_n-R_n$, contradicting the domination of $(R_n,\alpha_n)$.

Thus, in the case of \eqref{eq5} and \eqref{eq6}, we have
$x_{k-l-1}\in[\alpha_p+\alpha_n,\alpha_p+R_p)\cap E$.
Let $d\in\{2,\ldots,s\}$ be such that
$x_{k-l-1}=\min\tilde{P}_1\in P_d^{(1)}$.
The following collection of equalities is the key observation:
\begin{equation}
\label{eq9}
\tilde{P}_i\ \ =\ \ \begin{cases} \ P_{d+i-1}^{(1)}\qquad &\text{for $i\in\{1,\ldots,s-d+1\}$;} \\
P_{i-s+d-1}^{(2)} &\text{ for $i\in\{s-d+2,\ldots,s\}$.}
\end{cases}
\end{equation}
We postpone the proof of \eqref{eq9} until later and first complete the proof that \eqref{eq5} and \eqref{eq6} cannot hold simultaneously.

Since $E_{k-l}(x_i)=E_{m-l}(y_i)$, the convex hulls of the translates $y_{m-l}+E_{m-l}(y_i)$ and $x_{k-l-1}+E_{k-l}(x_i)$ have the same length. That is,
$
\max P_s^{(1)}-\min P_1^{(1)}
=\max\tilde{P}_s-\min\tilde{P}_1
=\max P_{d-1}^{(2)}-\min P_d^{(1)}.
$
It follows that the sum of the lengths of all $s-1$ $E$-gaps between the pseudointervals $P_i^{(1)}$, $i=1,\ldots,s$, is equal to the sum of the lengths of all gaps between the pseudointervals $\tilde{P}_i$, $i=1,\ldots,s$. By \eqref{eq9}, the latter is precisely the sum of the lengths of all gaps between
$P_d^{(1)},\ldots,P_s^{(1)},P_1^{(2)},\ldots,P_{d-1}^{(2)}$.

If $d\ge 3$, the gaps $(\max P_i^{(2)},\min P_{i+1}^{(2)})$, $i=1,\ldots,d-2$, are translates of the gaps $\bigl(\max P_i^{(1)},\min P_{i+1}^{(1)}\bigr)$. It follows that the gaps $(R_{p-1},\alpha_{p-1})$ and $(\max P_{d-1}^{(1)}, \min P_d^{(1)})$ have equal length (see Figure \ref{fig2}).

\begin{figure}[h]
\centering
\begin{tikzpicture}[
    >=latex,
    scale=0.85,
    font=\small,
    seg/.style={densely dotted,very thick},
    brace/.style={decorate,decoration={brace,amplitude=5pt}}
]

\def\Len{0.75}
\def\Gap{0.42}
\def\ShortGap{0.18}

% odległość między dwoma dolnymi blokami
\def\BlockShift{6.54}

% szerokość jednego bloku
\pgfmathsetmacro{\BlockWidth}{6*\Len+4*\Gap+\ShortGap}

% przesunięcie górnego bloku (środek nad całością)
\pgfmathsetmacro{\TopShift}{(\BlockShift+\BlockWidth-\BlockWidth)/2}

%%%%%%%%%%%%%%%%%%%%%%%%%%%%%%%%%%%%%%%%%%%%%%%%%%
% Makro pojedynczego bloku
%%%%%%%%%%%%%%%%%%%%%%%%%%%%%%%%%%%%%%%%%%%%%%%%%%

\newcommand{\DrawBlock}[1]{%
\begin{scope}[xshift=#1 cm]

\coordinate (A1) at (0,0);
\coordinate (B1) at (\Len,0);

\coordinate (A2) at ({\Len+\Gap},0);
\coordinate (B2) at ({2*\Len+\Gap},0);

\coordinate (A3) at ({2*\Len+2*\Gap},0);
\coordinate (B3) at ({3*\Len+2*\Gap},0);

\coordinate (A4) at ({3*\Len+2*\Gap+\ShortGap},0);
\coordinate (B4) at ({4*\Len+2*\Gap+\ShortGap},0);

\coordinate (A5) at ({4*\Len+3*\Gap+\ShortGap},0);
\coordinate (B5) at ({5*\Len+3*\Gap+\ShortGap},0);

\coordinate (A6) at ({5*\Len+4*\Gap+\ShortGap},0);
\coordinate (B6) at ({6*\Len+4*\Gap+\ShortGap},0);

\foreach \A/\B in {
A1/B1,
A2/B2,
A3/B3,
A4/B4,
A5/B5,
A6/B6}
{
    \draw[seg] (\A)--(\B);
}

\end{scope}
}

%%%%%%%%%%%%%%%%%%%%%%%%%%%%%%%%%%%%%%%%%%%%%%%%%%
% GÓRNY BLOK
%%%%%%%%%%%%%%%%%%%%%%%%%%%%%%%%%%%%%%%%%%%%%%%%%%

\begin{scope}[xshift=3.27cm,yshift=2cm]

\DrawBlock{0}

\end{scope}

%%%%%%%%%%%%%%%%%%%%%%%%%%%%%%%%%%%%%%%%%%%%%%%%%%
% DOLNE BLOKI
%%%%%%%%%%%%%%%%%%%%%%%%%%%%%%%%%%%%%%%%%%%%%%%%%%

\DrawBlock{0}
\DrawBlock{\BlockShift}

%%%%%%%%%%%%%%%%%%%%%%%%%%%%%%%%%%%%%%%%%%%%%%%%%%
% Oznaczenia
%%%%%%%%%%%%%%%%%%%%%%%%%%%%%%%%%%%%%%%%%%%%%%%%%%

\draw[->] (0,0)+(0,-0.75)--(0,-0.05);

\node[below=20pt,left] at (0,0)
{$\alpha_p$};

\node[below right] at (0,0)
{$P_1^{(1)}$};

\node[below right] at ({5*\Len+4*\Gap+\ShortGap},0)
{$P_s^{(1)}$};
\node[below right] at ({3*\Len+2*\Gap+\ShortGap},0)
{$P_d^{(1)}$};

\begin{scope}[xshift=\BlockShift cm]

\draw[->] (0,0)+(0,-0.75)--(0,-0.05);

\node[below=20pt] at (0,0)
{$\alpha_{p-1}$};

\node[below right] at (0,0)
{$P_1^{(2)}$};

\node[below right] at ({5*\Len+4*\Gap+\ShortGap},0)
{$P_s^{(2)}$};
\node[below right] at ({2*\Len+2*\Gap},0)
{$P_{d-1}^{(2)}$};
\end{scope}

\begin{scope}[xshift=3.27cm,yshift=2cm]

\draw[->] (0,0)+(0,-0.75)--(0,-0.05);

\node[below=20pt,left] at (0,0)
{$x_{k-l-1}$};

\node[below right] at (0,0)
{$\tilde{P}_1$};

\node[below right] at ({5*\Len+4*\Gap+\ShortGap},0)
{$\tilde{P}_s$};

\draw[brace,decoration={brace,mirror,amplitude=5pt}]
(3*\Len+2*\Gap,-0.1)--(3*\Len+2*\Gap+\ShortGap,-0.1)
node[midway,below=2pt]
{$\alpha_{p-1}-R_{p-1}$};

\end{scope}
\draw[->] (6*\Len+4*\Gap+3/2*\ShortGap,1.3)--(3*\Len+2*\Gap+\ShortGap/2,0.07);
\draw[->] (6*\Len+4*\Gap+3/2*\ShortGap,1.3)--(9*\Len+6*\Gap+5*\ShortGap/2,0.07);
\end{tikzpicture}

\caption{The gaps on the left of $P_d^{(1)}$ and on the right of $P_{d-1}^{(2)}$ must have the same length as the gap between $P_s^{(1)}$ and $P_1^{(2)}$ which is equal to $\alpha_{p-1} - R_{p-1}$.}
\label{fig2}

\end{figure}

Similarly, $E\supset x_{k-l-1}+x_{k-l}+E_k(x_i) \,=\,\bigsqcup_{j=1}^s\tilde{\tilde{P}}_j$, where the pseudointervals $\tilde{\tilde{P}}_j$ are given by $\tilde{\tilde{P}}_j:=x_{k-l-1}+x_{k-l}+e_j+E_m(y_i)$. By an argument fully analogous to the proof of \eqref{eq9}, we obtain
$$
\tilde{\tilde{P}}_i\ \ =\ \ \begin{cases} \ P_{d+i-1}^{(2)}\qquad &\text{for $i\in\{1,\ldots, s-d+1\}$;} \\
P_{i-s+d-1}^{(3)} &\text{ for $i\in\{s-d+2,\ldots,s\}$.}
\end{cases}
$$
It follows that the gaps $(\alpha_{p-1}+R_p,\alpha_{p-1}+\alpha_p)$ and $(\max P_{d-1}^{(2)}, \min P_d^{(2)})$ have equal length (analogously to Figure \ref{fig2}). Since $(\max P_{d-1}^{(2)}, \min P_d^{(2)})$ is a translation of $(\max P_{d-1}^{(1)}, \min P_d^{(1)})$, we conclude that the gaps $(R_{p-1},\alpha_{p-1})$ and $(R_p,\alpha_p)$ have the same length. Thus, $\alpha_{p-1}=2\alpha_p$, which contradicts the irreducibility of the representation $(\alpha_i;M_i)_{i\in\mathbb N}$, because $M_p=1$. Therefore, the conjunction of \eqref{eq5} and \eqref{eq6} is impossible.

To complete the justification of the previous statement, we now prove \eqref{eq9}. Since $\max P_d^{(1)}$ is a left endpoint of an $E$-gap and $\min \tilde{P}_1$ is a left endpoint of a pseudointerval contained in $E$, we must have $\min \tilde{P}_1\ne\max P_d^{(1)}$. Thus, either $\min\tilde{P}_1=\min P_d^{(1)}$ or $\min\tilde{P}_1\in(\min P_d^{(1)},\max P_d^{(1)})$.

The second possibility cannot occur. To see this, first consider the case $d=s$. Then $\max\tilde{P}_1\in\bigl[\min P_1^{(2)},\max P_1^{(2)}\bigr)$. It follows that either $\min\tilde{P}_2\in\bigl(\min P_1^{(2)},\max P_1^{(2)}\bigr]$ or $\min \tilde{P}_2\ge\min P_2^{(2)}$.

If $\min\tilde{P}_2\in\bigl(\min P_1^{(2)}, \max P_1^{(2)}\bigr]$, then $\max \tilde{P}_2\in P_2^{(2)}$, and hence the pseudointerval $\tilde{P}_2$ contains a gap of length at least $\min P_2^{(2)}- \max P_1^{(2)}=\alpha_n-R_n$. Thus, $E_m(y_i)$ contains a gap of the same length, which contradicts the domination of $(R_n,\alpha_n)$. Therefore, $\min \tilde{P}_2\ge\min P_2^{(2)}$. But then $\min\tilde{P}_2-\max\tilde{P}_1\,>\,\min P_2^{(2)}-\max P_1^{(2)}\,=\,\alpha_n-R_n$. Hence, a translate of $E_{m-l}(y_i)$ contains a gap longer than the longest gap of $E$, which is a contradiction.

Now, consider the remaining case $d<s$. Then $\min\tilde{P}_1\in\bigl(\min P_d^{(1)},\max P_d^{(1)}\bigr)$ implies $\max \tilde{P}_1\in\bigl(\min P_{d+1}^{(1)},\max P_{d+1}^{(1)}\bigr)$, which leads to a contradiction with the domination of $(R_n,\alpha_n)$ if $\min P_{d+1}^{(1)}-\max P_d^{(1)} =\alpha_n-R_n$. Otherwise, the gap between the pseudointervals $P_d^{(1)}$ and $P_{d+1}^{(1)}$ is shorter than $\alpha_n-R_n$. Then $d\le s-2$, because $\min P_s^{(1)}-\max P_{s-1}^{(1)}=\alpha_n-R_n$. Thus, by our Observation, the $\bigl(\alpha_p+E_{m-l}(y_i)\bigr)$-gap between the pseudointervals $P_{d+1}^{(1)}$ and $P_{d+2}^{(1)}$ has length exactly $\alpha_n-R_n$. This forces either $\tilde{P}_2$ to contain a gap of length at least $\alpha_n-R_n$ (a contradiction!) or the $(x_{k-l-1}+E_{m-l}(y_i))$-gap $\bigl(\max\tilde{P}_1, \min\tilde{P}_2\bigr)$ to be longer than $\alpha_n-R_n$. It follows that $E_{m-l}(y_i)$ contains an $E$-gap longer than $\alpha_n-R_n$, which is again a contradiction (see Figure \ref{fig3}).

Therefore, the first possibility, $\min\tilde{P}_1=\min P_d^{(1)}$, holds, and it yields $\tilde{P}_1=P_d^{(1)}$, because both are pseudointervals of the same length.

\begin{figure}[h]
\centering

\begin{tikzpicture}[
    scale=1.3,
    font=\small,
    seg/.style={densely dotted,very thick},
    redseg/.style={densely dotted,very thick,red}
]

\def\L{1.4}

%%%%%%%%%%%%%%%%%%%%%%%%%%%%%%%%%%%%
% GÓRNE PRZEDZIAŁY
%%%%%%%%%%%%%%%%%%%%%%%%%%%%%%%%%%%%

\draw[seg] (0,0)--(\L,0);
\node[above] at (0.7,0)
{$P_d^{(1)}$};

\draw[seg] (2.2,0)--(2.2+\L,0);
\node[above] at (2.9,0)
{$P_{d+1}^{(1)}$};

\draw[seg] (4.3,0)--(4.3+\L,0);
\node[above] at (5.0,0)
{$P_{d+2}^{(1)}$};

%%%%%%%%%%%%%%%%%%%%%%%%%%%%%%%%%%%%
% DRUGI POZIOM
%%%%%%%%%%%%%%%%%%%%%%%%%%%%%%%%%%%%

% \tilde P_1

\draw[seg]
(1.2,-0.7)--(1.2+\L,-0.7);

\node[below] at (1.9,-0.7)
{$\tilde P_1$};

% \tilde P_2

\draw[seg]
(3.3,-0.7)--(4.7,-0.7);

\node[below] at (4.0,-0.7)
{$\tilde P_2$};

%%%%%%%%%%%%%%%%%%%%%%%%%%%%%%%%%%%%
% TRZECI POZIOM
%%%%%%%%%%%%%%%%%%%%%%%%%%%%%%%%%%%%

\draw[redseg]
(4.7,-1.4)--(6.1,-1.4);

\node[below,red] at (5.4,-1.4)
{$\tilde P_2$};

\node[below] at (1.8,0.0)
{$G$};
\node[below] at (3.95,0.0)
{$H$};
\end{tikzpicture}

\caption{If the gap $G$ between $P_d^{(1)}$ and $P_{d+1}^{(1)}$ has length equal to $\alpha_n-R_n$, then also $\tilde{P}_1$ contains a gap of such a length which leads to a contradiction with the fact that $(R_n,\alpha_n)$ is dominating. If $G$ is shorter, than, by Observation, the gap $H$ between $P_{d+1}^{(1)}$ and $P_{d+2}^{(2)}$ has length equal to $\alpha_n-R_n$. If the left endpoint of $\tilde{P}_2$ lies in $P_{d+1}^(1)$ (black, upper variant), then the right endpoint must lie in $P_{d+2}^{(1)}$, and so $\tilde{P}_2$ contains a gap of length $\alpha_n-R_n$, a contradiction. If the left endpoint of $\tilde{P}_2$ lies in $P_{d+2}^(1)$ (red, lower variant), then the distance between $\tilde{P}_1$ and $\tilde{P}_2$ is longer than $\alpha_n-R_n$, a contradiction.}
\label{fig3}
\end{figure}

Suppose now that the equalities from \eqref{eq9} hold for $i=1,\ldots, t$ for some $t\le s-2$. Consider the family of pseudointervals
$$
\bigl\{W_i:\quad i=1,\ldots,s,\bigr\}\ :=\ \bigl\{P_{d+i-1}^{(1)}:\quad i=1,\ldots,s-d+1,\bigr\}\ \cup\ \bigl\{P_{i-s+d-1}^{(2)}:\quad i=s-d,\ldots,s,\bigr\}
$$
with the natural order inherited from the real line. In particular, $\tilde{P}_t=W_t$. If the $E$-gap between $W_t$ and $W_{t+1}$ has length $\alpha_n-R_n$, then $\min\tilde{P}_{t+1}$ must equal $\min W_{t+1}$; otherwise, the $(x_{k-l-1}+E_{k-l}(x_i))$-gap $(\max\tilde{P}_t, \min\tilde{P}_{t+1})$ would be longer than $\alpha_n-R_n$, and hence $E_{k-l}(x_i)$ would contain a gap of the same length, which is a contradiction. Thus, $\min\tilde{P}_{t+1}=\min W_{t+1}$, and consequently $\tilde{P}_{t+1}= W_{t+1}$, since both are pseudointervals.

If the $E$-gap between $W_t$ and $W_{t+1}$ is shorter than $\alpha_n-R_n$, then, by our Observation, the $E$-gap between $W_{t+1}$ and $W_{t+2}$ has length $\alpha_n-R_n$. In this case, $\min \tilde{P}_{t+1}$ cannot be greater than or equal to $\min W_{t+2}$, and hence $\min\tilde{P}_{t+1}\in W_{t+1}$. If $\min\tilde{P}_{t+1}\in(\min W_{t+1},\max W_{t+1})$, then $\max \tilde{P}_{t+1}\in W_{t+2}$. Hence, there is an $\bigl(x_{k-l-1}+E_{k-l}(x_i)\bigr)$-gap between $\min\tilde{P}_{t+1}$ and $\max\tilde{P}_{t+1}$ of length at least $\min W_{t+2}-\max W_{t+1}=\alpha_n -R_n$. It follows that $E_k(x_i)$ contains an $E$-gap of length at least $\alpha_n-R_n$, contradicting the domination of $(R_n,\alpha_n)$ (similarly to Figure \ref{fig3}). Therefore, we must have $\min\tilde{P}_{t+1}=\min W_{t+1}$, and it follows that $\tilde{P}_{t+1}=W_{t+1}$.

If the equalities from \eqref{eq9} hold for $i=1,\ldots, s-1$, then the gaps $(\max W_{s-1},\min W_s)$ and \linebreak $(\max\tilde{P}_{s-1},\min\tilde{P}_s)$ both have length $\alpha_n-R_n$, by the symmetry of $E_{m-l}(y_i)$ with respect to the point $\frac12r_{m-l}^y$. It follows that $\tilde{P}_s=\alpha_n-R_n+r^x_k+\tilde{P}_{s-1}=\alpha_n-R_n+r^y_{m}+W_{s-1}=W_s$. This completes the proof of \eqref{eq9}, which, in turn, completes the proof that \eqref{eq5} and \eqref{eq6} cannot hold simultaneously.

Therefore, in the case \eqref{eq5}, it must be that \eqref{eq7} holds. Then either $x_{k-l-1}>\alpha_{p-1}$ or $x_{k-l-1}=\alpha_{p-1}$. In the first case,
$$
r_{k-l-1}^x\ =\ \sum_{j=k-l}^\infty x_j\ =\ \sum_{j=m-l}^\infty y_j\ =\ r_{m-l-1}^y\ =\ R_{p-1}\ <\ \alpha_{p-1}\ <\ x_{k-l-1}.
$$
Thus, since $E=E(x_i)$, the interval $(R_{p-1},x_{k-l-1})$ is an $E$-gap, which contradicts $\alpha_{p-1}\in E$. Therefore, if \eqref{eq3} holds, we must have $x_{k-l-1}=\alpha_{p-1}=y_{m-l-1}$, and thus \eqref{eq4} holds in the case \eqref{eq3}.

\textsl{Case 2}. Suppose that
\begin{equation}
\label{eq10}
M_p\ =\ 2.
\end{equation}
Then either $y_{m-l+1}=y_{m-l}$ or $y_{m-l+1}<y_{m-l}$.

If $y_{m-l+1}=y_{m-l}=\alpha_p$, then $y_{m-l-1}=\alpha_{p-1}$, by \eqref{eq10}. We will show that, in this case, we cannot have $x_{k-l-1}>x_{k-l}$. If $x_{k-l-1}>2\alpha_p$, then $x_{k-l-1}>x_{k-l}+r_{k-l}^x$. Hence $\bigl(r_{k-l-1}^x, x_{k-l-1}\bigr)$ is an $E$-gap, while $2\alpha_p=y_{m-l+1}+y_{m-l}\in E$ lies in this gap, which is a contradiction.

If $x_{k-l-1}=2\alpha_p$, then
$$
E_{k-l-1}(x_i)\ =\ E\cap[0,\alpha_p+R_p]\ =\ \bigsqcup_{j=1}^{2s}V_j
$$
where the pseudointervals $V_j$ are defined by $V_j:=e_j+E_m(y_i)$ if $j\in{1,\ldots,s,}$ and by $V_j:=\alpha_p+e_{j-s}+E_m(y_i)$ if $j\in{s+1, \dots,s,}$. Putting $\tilde{V}_j:=x_{k-l-1}+V_j$, we obtain
$$
\bigsqcup_{j=1}^{2s}\tilde{V}_j\ =\ x_{k-l-1}+E_{k-l-1}(x_i)\ \subset \ E.
$$
Obviously, if $x_{k-l-1}=2\alpha_p$, then $\tilde{V}_j=2\alpha_p+V_j$ for $j=1,\ldots,s$. Moreover, by an argument analogous to the proof of
\eqref{eq9}, we see that $\tilde{V}_j=\alpha_{p-1}+V_{j-s}$ for $j=s+1,\ldots,s$.
Hence, the $E$-gaps $(R_p,\alpha_p)$ and $(2\alpha_p+R_p,\alpha_{p-1})$ have the same length (see Figure \ref{fig4}). It follows that $\alpha_{p-1}=3\alpha_p=(M_p+1)\alpha_p$, which contradicts the irreducibility of $(\alpha_i;M_i)_{i\in\mathbb N}$.

\begin{figure}[h]
%\centering

\begin{tikzpicture}[
    scale=0.55,
    font=\small,
    seg/.style={densely dotted,very thick},
    brace/.style={decorate,decoration={brace,amplitude=5pt}}
]

%------------------------------------------------
% Parametry
%------------------------------------------------

\def\Len{0.75}
\def\Gap{0.42}
\def\ShortGap{0.18}

\pgfmathsetmacro{\BlockWidth}{6*\Len+4*\Gap+\ShortGap}

% odległości między blokami
\pgfmathsetmacro{\Ggap}{0.9}
\pgfmathsetmacro{\Extra}{0.55}

%------------------------------------------------
% Makro bloku
%
% #1 - przesunięcie x
% #2 - podpis pierwszego przedziału
% #3 - podpis ostatniego przedziału
% #4 - podpis punktu początku drugiego przedziału
% #5 - przesunięcie y
% #6 - 0 podpisy pod, 1 podpisy nad
%------------------------------------------------

\newcommand{\DrawBlock}[6]{%

\begin{scope}[xshift=#1cm,yshift=#5cm]

% współrzędne przedziałów

\coordinate (A1) at (0,0);
\coordinate (B1) at (\Len,0);

\coordinate (A2) at (\Len+\Gap,0);
\coordinate (B2) at (2*\Len+\Gap,0);

\coordinate (A3) at (2*\Len+2*\Gap,0);
\coordinate (B3) at (3*\Len+2*\Gap,0);

\coordinate (A4) at (3*\Len+2*\Gap+\ShortGap,0);
\coordinate (B4) at (4*\Len+2*\Gap+\ShortGap,0);

\coordinate (A5) at (4*\Len+3*\Gap+\ShortGap,0);
\coordinate (B5) at (5*\Len+3*\Gap+\ShortGap,0);

\coordinate (A6) at (5*\Len+4*\Gap+\ShortGap,0);
\coordinate (B6) at (6*\Len+4*\Gap+\ShortGap,0);

% przedziały

\foreach \A/\B in
{A1/B1,A2/B2,A3/B3,A4/B4,A5/B5,A6/B6}
{
    \draw[seg] (\A)--(\B);
}

% podpisy przedziałów

\ifnum#6=0

\node[above=2pt] at
({0.5*\Len},0)
{$#2$};

\node[above=2pt] at
({5.5*\Len+4*\Gap+\ShortGap},0)
{$#3$};

\else

\node[above=2pt] at
({0.5*\Len},0)
{$#2$};

\node[above=2pt] at
({5.5*\Len+4*\Gap+\ShortGap},0)
{$#3$};

\fi

% strzałka punktu początku drugiego przedziału

%\if#1=0
%
%\else
%
%\draw[->]
%(A1)+(0,-0.75)
%--(A1)+(0,-0.05)
%node[midway,left=3pt]
%{$#4$};
%
%\fi

\end{scope}
}

%------------------------------------------------
% Pozycje bloków
%------------------------------------------------

\pgfmathsetmacro{\Btwo}{\BlockWidth+\Ggap}

\pgfmathsetmacro{\Bthree}{2*\BlockWidth+2*\Ggap}

\pgfmathsetmacro{\Bfour}{3*\BlockWidth+2*\Ggap+\Extra}

%------------------------------------------------
% DOLNY RZĄD
%------------------------------------------------

\DrawBlock
{0}
{V_1}
{V_s}
{\alpha_n}
{0}
{0}
\draw[->]
(A2)+(0,-0.95)
--(A2)+(0,-0.05)
node[below=14 pt]
{$\alpha_n$};
\draw[brace,decoration={brace,mirror,amplitude=3pt}]
(6*\Len+4*\Gap+\ShortGap,-0.1)--(\Btwo,-0.1)
node[midway,below=2pt]
{$G$};

\DrawBlock
{\Btwo}
{V_{s+1}}
{V_{2s}}
{x_{k-l}=y_{m-l}=\alpha_p}
{0}
{0}
\draw[->]
(A1)+(0,-0.95)
--(A1)+(0,-0.05)
node[below=18 pt,right]
{$\alpha_p=x_{k-l}=y_{m-l}$};

\DrawBlock
{\Bthree}
{ }
{ }
{2\alpha_p}
{0}
{0}
\draw[->]
(A1)+(0,-0.95)
--(A1)+(0,-0.05)
node[below=14 pt]
{$2\alpha_p$};

\DrawBlock
{\Bfour}
{ }
{ }
{\alpha_{p-1}}
{0}
{0}
\draw[->]
(A1)+(0,-0.95)
--(A1)+(0,-0.05)
node[below=14 pt]
{$\alpha_{p-1}$};

%------------------------------------------------
% GÓRNY RZĄD
%------------------------------------------------

\DrawBlock
{\Bthree}
{\tilde V_1}
{\tilde V_s}
{x_{k-l-1}}
{1.8}
{1}
\draw[->]
(A1)+(0,-0.55)
--(A1)+(0,-0.05)
node[below=5 pt]
{$x_{k-l-1}$};
\draw[brace,decoration={brace,mirror,amplitude=3pt}]
(\Bthree+6*\Len+4*\Gap+\ShortGap,1.7)--(\Bfour,1.7)
node[midway,below=4pt]
{$H$};
\draw[brace,decoration={brace,amplitude=3pt}]
(\Bthree+6*\Len+4*\Gap+\ShortGap,0.1)--(\Bfour,0.1);
\DrawBlock
{\Bfour}
{\tilde V_{s+1}}
{\tilde V_{2s}}
{}
{1.8}
{1}

\end{tikzpicture}

\caption{Gaps $G$ and $H$ must have the same length.}
\label{fig4}
\end{figure}

Thus, in the case $x_{k-l-1}>x_{k-l}$, we must have $x_{k-l-1}\in(\alpha_p,\alpha_p+R_p)$. Then, by considering the two translations $x_{k_l-1}+E_{k-l}(x_i)$ and $x_{k-l-1}+x_{k-l}+E_{k-l}(x_i)$, we arrive at a contradiction with the irreducibility of $(\alpha_i;M_i)_{i\in\mathbb N}$ in a manner analogous to the final part of the proof that \eqref{eq5} and \eqref{eq6} cannot hold simultaneously.

Therefore, in the case $y_{m-l+1}=y_{m-l}$, the only possible value of $x_{k-l-1}$ is $\alpha_p$, that is, $x_{k-l-1}=y_{m-l-1}$.

If $y_{m-l+1}<y_{m-l}=\alpha_p$, then $y_{m-l-1}=y_{m-l}$ by \eqref{eq10}. Clearly, $x_{k-l-1}\ge x_{k-l}$. If $x_{k-l-1}>x_{k-l}=y_{m-l}$, then $2\alpha_p=y_{m-l-1}+y_{m-l}\in(x_{k-l}+r_{k-l}^x, x_{k-l-1})$, and hence $2\alpha_p\not\in E(x_i)=E$, a contradiction. Thus, $x_{k-l-1}$ must be equal to $y_{m-l}$, and so to $y_{m-l-1}$. This completes the proof of Lemma \ref{l6}.
\end{proof}

\begin{proof}[Proof of Theorem \ref{t5}]
($\Rightarrow $) If $(\alpha_i;M_i)_{i\in\mathbb N}$ is not irreducible, then $E(\alpha_i;M_i)$ is not uniquely achievable, as we have seen at the beginning of this section. Now suppose that $M_{i_0}\ge 3$ for some $i_0\in\mathbb N$.

If $2\alpha_{i_0}=\alpha_j$ for some $j\in\mathbb N$, then $E(\alpha_i;M_i)=E(\alpha_i; N_i)$, where
$$
N_i\ \ =\ \ \begin{cases}
M_i+1 \qquad&\text{if $i=j$;}\\
M_{i_0}-2 &\text{if $i=i_0$;}\\
M_i &\text{otherwise.}
\end{cases}
$$

If $2\alpha_{i_0}\ne\alpha_j$ for all $j\in\mathbb N$, then $E(\alpha_i;M_i)=E(\beta_i; N_i)$, where $(\beta_i)$ is the unique monotone extension of $(\alpha_i)$ obtained by insertion of the value $2\alpha_{i_0}$ and where
$$
N_i\ \ =\ \ \begin{cases}
1 \qquad&\text{if $\beta_i=2\alpha_{i_0}$;}\\
M_i-2 &\text{if $\beta_i=\alpha_{i_0}$;}\\
M_i &\text{otherwise.}
\end{cases}
$$

($\Leftarrow$) Since $E$ has infinitely many gaps, it has infinitely many dominating gaps. Let $(y_i)_{i\in\mathbb N}=(\alpha_i;M_i)_{i\in\mathbb N}$, and let $(n_i)_{i\in\mathbb N}$ be an increasing sequence of indices such that $\bigl((R_{n_i},\alpha_{n_i})\bigr)_{i\in\mathbb N}$ is the sequence of all dominating $E$-gaps. Let $(x_i)$ be a sequence of positive, non-increasing terms such that $E=E(x_i)$. Applying Lemma \ref{l6} to $E(\alpha_i;M_i)$, we obtain that
$$
x_i\ =\ y_i \qquad\text{for}\ i=1,2,\ldots,\sum_{p=1}^{n_1}M_p.
$$

Observe that $(R_{n_{i+1}}, \alpha_{n_{i+1}})$ is the longest dominating gap in $E\cap[0,R_{n_i}]=E\bigl((\alpha_p;M_p){p>n_i}\bigr)$. Moreover,
$$
E\bigl((\alpha_p;M_p)_{p>n_i}\bigr)\ \ =\ \ E\bigl((x_p)_{p> \sum_{k=1}^{n_i}M_k}\bigr)
$$
provided that $x_j=y_j$ for $j=1,2,\ldots, \sum_{k=1}^{n_i}M_k$. Thus, again by Lemma \ref{l6}, we obtain $x_j=y_j$ for $j=1+\sum_{k=1}^{n_i}M_k, 2+\sum_{k=1}^{n_i}M_k, \sum_{k=1}^{n_{i+1}}M_k$. Therefore, by induction on $i$, we obtain $x_j=y_j$ for all $j\in\mathbb N$, which means that $E$ is uniquely achievable.
\end{proof}

The proof of Theorem \ref{t5} is significantly more complicated and longer than that of Theorem  \ref{t4}, because we did not have a characterization of the minimality of centers of distances for symmetric Cantor sets at our disposal. Finding such a characterization remains an interesting open problem.

\begin{problem}
Characterize the semi-fast convergent sequences that have minimal centers of distances.
\end{problem}

\section{The Cantorval range}

In the previous section, we examined when certain special achievable Cantor sets are uniquely determined. It is natural to consider other topological types of achievement sets. The situation is straightforward for finite unions of intervals, since they are never uniquely achievable (see \cite{BGM18}). The situation for Cantorvals, however, appears to be more difficult. The only known result in this direction is that the Guthrie-Nymann Cantorval $E(3,2;\frac14)$ has a unique representation (see \cite[Theorem 5.2]{BGM18}), and no other uniquely achievable Cantorvals are currently known.

The next theorem not only generalizes the unique achievability of the Guthrie-Nymann Cantorval, but also provides some new examples of uniquely achievable Cantorvals.

\begin{theorem}\label{Cantorvale unikalne}
Let $(x_n)$ be a summable decreasing sequence of positive terms such that $x_{2n}>r_{2n}$, $x_{2n-1}<r_{2n-1}$ and $x_{2n}-r_{2n} > x_{2n+2}-r_{2n+2}$ for every $n$. Then $E(x_n)$ is uniquely achievable.
\end{theorem}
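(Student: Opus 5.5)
The plan is to identify the principal gaps $(r_{2n},x_{2n})$ as exactly the dominating gaps of $E=E(x_n)$, which do not depend on the representation. Then, for an arbitrary representation $E=E(y_i)$, the Third Gap Lemma pins down the terms of $(y_i)$ block by block, two terms per block.

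\emph{Step 1: gaps of $E$.} Since $r_k<x_k$ holds exactly for even $k$, the family $\mathcal G_k$ is nonempty only for even $k$. I will check that every $E$-gap of order $k$ has length at most $x_k-r_k$. This holds because
$$I_k\supset E_1^{k-1}+\bigl([0,r_k]\cup[x_k,x_k+r_k]\bigr),$$
so every component of $I_{k-1}\setminus I_k$ lies inside some hole $(f+r_k,f+x_k)$ with $f\in E_1^{k-1}$. By the First Gap Lemma, $(r_{2n},x_{2n})$ is an $E$-gap. Every $E$-gap to its left lies in $[0,r_{2n}]$, so it is an $E_{2n}(x_i)$-gap. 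Hence it has order $2j$ for some $j>n$, and its length is at most $x_{2j}-r_{2j}<x_{2n}-r_{2n}$ by the assumed strict decrease. So each $(r_{2n},x_{2n})$ is dominating. Conversely, every dominating gap is principal (Third Gap Lemma), and principal gaps exist only at even orders. Therefore the dominating gaps of $E$ are exactly the intervals $(r_{2n},x_{2n})$, $n\in\mathbb N$. Since domination depends only on the set $E$, this list is the same for every representation.

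\emph{Step 2: matching with another representation.} Let $E=E(y_i)$ with $(y_i)$ positive and non-increasing. By the Third Gap Lemma applied to the representation $(y_i)$, each dominating gap $(r_{2n},x_{2n})$ is principal for $(y_i)$. So there is $k_n$ with
$$r^y_{k_n}=r_{2n},\qquad y_{k_n}=x_{2n}.$$
Since $(r_{2n})$ is strictly decreasing, $(k_n)$ is strictly increasing. Put $k_0:=0$ and use $r^y_0=\max E=r_0$. Subtracting consecutive remainders gives
$$\sum_{k_{n-1}<i\le k_n}y_i\ =\ r_{2n-2}-r_{2n}\ =\ x_{2n-1}+x_{2n}.$$
All terms in this block are at least $y_{k_n}=x_{2n}$, and the last one equals $x_{2n}$. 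Hence the terms $y_i$ with $k_{n-1}<i<k_n$ sum to $x_{2n-1}>0$, so there is at least one such term.

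\emph{Step 3: the key inequality.} From $x_{2n-1}<r_{2n-1}=x_{2n}+r_{2n}$ and $r_{2n}<x_{2n}$ we get
$$x_{2n-1}<2x_{2n}.$$
Each of the terms in Step 2 is at least $x_{2n}>\tfrac12x_{2n-1}$, so there cannot be two or more of them. Thus $k_n=k_{n-1}+2$, $y_{k_n-1}=x_{2n-1}$ and $y_{k_n}=x_{2n}$. Induction on $n$ gives $k_n=2n$ and $(y_i)=(x_i)$.

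The main obstacle is Step 1, namely the precise claim that gaps of order $k$ have length at most $x_k-r_k$. Its consequence, that no longer gap can hide inside $[0,r_{2n}]$, is what lets the Third Gap Lemma be applied to every $(r_{2n},x_{2n})$. I would settle this first through the covering $I_k\supset E_1^{k-1}+\bigl([0,r_k]\cup[x_k,x_k+r_k]\bigr)$. Steps 2 and 3 are then short computations.
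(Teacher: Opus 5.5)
Your proof is correct and follows essentially the same route as the paper. The principal gaps $(r_{2n},x_{2n})$ are dominating, the Third Gap Lemma transfers them to $(y_i)$ as $(r^y_{k_n},y_{k_n})$, and the block-sum identity together with $x_{2n-1}<2x_{2n}$ forces $k_n-k_{n-1}=2$. Your Step 1 and the choice $k_0=0$ make explicit two points the paper passes over quickly: why each $(r_{2n},x_{2n})$ is dominating (via your bound $x_k-r_k$ on gaps of order $k$), and how the initial block $y_1,\dots,y_{k_1}$ is handled.
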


\begin{proof}
Suppose that $E=E(y_n)$ for some nonincreasing sequence $(y_n)$ of positive terms. Let $r_n^x = \sum_{i>n}x_i$ and $r_n^y=\sum_{i>n}y_i$.

By our assumptions, every principal gap with respect to the sequence $(x_n)$ is dominating, and hence it is also principal with respect to the sequence $(y_n)$. Therefore, for every $n \in \N$ there is $k_{n}$ such that $y_{k_n} = x_{2n}$. Moreover, $r_{k_n}^y=r_{2n}^x$.

Therefore,
$$\sum_{i=k_n+1}^{k_{n+1}}y_i=r_{k_n}^y-r_{k_{n+1}}^y = r_{2n}^x-r_{2n+2}^x = x_{2n+1}+x_{2n+2} = x_{2n+1}+y_{k_{n+1}}.$$
Thus, $\sum_{i=k_n+1}^{k_{n+1}-1}y_i =x_{2n+1}$.

Suppose that $k_{n+1}-k_n > 2$. Then
$$
y_{k_{n+1}-1} \ \leq\ \tfrac12x_{2n+1}\ <\ \tfrac12r_{2n+1}^x \ =\ \tfrac12 x_{2n+2}+\tfrac12 r_{2n+2}^x\ <\ \tfrac12 x_{2n+2}+\tfrac12 x_{2n+2} \ =\ x_{2n+2} \,=\, y_{k_{n+1}},
$$
which contradicts the monotonicity of $(y_n)$.

Hence $k_{n+1}-k_n =2$, and so $y_{k_n+1}=x_{2n+1}$. Therefore, all terms of $(x_n)$ also appear in $(y_n)$. Since the sums of the two sequences are equal, $(x_n)=(y_n)$.
\end{proof}

In particular, every Cantorval of the form $E(k_1,k_2;q)$ satisfies the assumptions of Theorem \ref{Cantorvale unikalne}, and is therefore uniquely represented. For example, the Guthrie-Nymann Cantorval $E(3,2;\frac14)$ is such a set. However, there are other Cantorvals of this form. One such example is provided in \cite{NowCan}, where it is proved that the set $E(1,2\sqrt2-2;\frac{2-\sqrt2}{2})$ is a Cantorval. By Theorem \ref{Cantorvale unikalne}, it is uniquely achievable. Another example is $E(8,5;\frac14)$, which is a Cantorval by \cite[Theorem 8]{GK} (cf. \cite{Nit15}).

There are also two particular non-multigeometric sequences that deserve attention. One of them is defined by $b_n:=\frac1{2^n}+\frac{(-1)^n}{3^n}$ (see \cite[p.518]{Jones}). The second one is given by $a_{2n-1}=\frac1{2^{2n-1}}$ and $a_{2n}=\frac1{2^{2n}+1}$ (see \cite{PR25}). Thus, it is possible to apply Theorem \ref{Cantorvale unikalne} to both sequences and conclude that their achievement sets $E(a_n)$ and $E(b_n)$ are both uniquely determined. The paradox is that we do not know the topological type of either of these two achievement sets, and this problem appears to be quite challenging (cf. \cite[Problems 1.8 and 1.9]{GP26}).

Obviously, every Cantorval whose representation contains a term that is twice as large as another term  cannot have a unique representation. Similarly, a Cantorval cannot be uniquely achievable if a term repeats at least three times, or if a term repeats two times and another term is its triple.

The paper \cite{NowCan} contains a theorem that serves as a generator of infinitely many Cantorvals, mostly non-multigeometric ones. However, they all have geometry similar to that of the Guthrie-Nymann Cantorval. It is therefore natural to ask whether they share the property of unique achievability. Let us first recall the theorem from \cite{NowCan}.

\begin{theorem}\label{mami}

Let $(k_n)$, $(m_n)$ be sequences of natural numbers such that $k_1>1$ and $k_{n+1} > k_n+m_n$. Put $k_0:=0$, $m_0:=1,$ $K_n := {k_n,\dots,k_n+m_n-1}$, $K=\bigcup_{n=1}^\infty K_n$.
Let $x_1 > 0$ and for $i > 1$
$$
x_i \ \ := \ \ \begin{cases}
\frac12 x_{i-1}  &\text{ if } i-1,i,i+1 \notin K \text{ or } i-1,i \in K\\
\frac{2^{m_n}+1}{2^{m_n+1}} x_{i-1} &\text{ if } i=k_n-1> k_{n-1}+m_{n-1}\\
\frac{2^{m_n}+1}{2^{m_n+2}} x_{i-1} &\text{ if } i=k_n-1=k_{n-1}+m_{n-1}\\
\frac{2^{m_n}}{2^{m_n}+1} x_{i-1} &\text{ if }i=k_n \\
\frac14 x_{i-1} &\text{ if } i,i+1 \notin K, i-1\in K.
\end{cases}
$$
Then $E(x_n)$ is a Cantorval. Moreover, the sequence $(x_n)$ is such that $x_n > r_n$ if and only if $n \in K$.
\end{theorem}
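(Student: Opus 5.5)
The statement has two parts, and I would prove them in the opposite order. First comes the combinatorial claim that $x_n>r_n$ if and only if $n\in K$. Then the topological claim follows from the Guthrie--Nymann Classification Theorem \cite[Theorem 1]{GN88}.

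\textbf{Step 1: locating the fast indices.} The recursion makes $(x_i)$ piecewise geometric with ratio $\tfrac12$. The exceptional ratios occur only at the indices $k_n-1$ and $k_n$, and at the first index after a block $K_{n-1}$ (the ratio $\tfrac14$). I would therefore cut the tail $r_i$ into consecutive ``blocks'' $[k_{n-1}+m_{n-1},\,k_n+m_n-1]$ and express each block sum as an explicit multiple of its first term. Three computations then suffice.
\begin{itemize}
\item On a run of indices with halving ratio outside $K$, the tail behaves like a geometric tail with ratio $\tfrac12$, perturbed only at the next $k_n-1$. The factor $\frac{2^{m_n}+1}{2^{m_n+1}}>\tfrac12$ at $k_n-1$ is chosen exactly so that $x_i\le r_i$ there, with equality occurring only in degenerate places.
\item For $i\in K_n$ the terms are halved and then followed by a drop of ratio $\tfrac14$. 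This gives $x_i>r_i$ by a direct comparison $x_i>2x_{i+1}\ge\ldots$, after checking the single inequality at the block end.
\item The case $k_n-1=k_{n-1}+m_{n-1}$ (adjacent blocks) is handled the same way, using the third line of the recursion.
\end{itemize}
Since $K$ is infinite, there are infinitely many $n$ with $x_n>r_n$. By the classification theorem, $E(x_n)$ is then not a finite union of intervals, so it remains only to show that $E$ is not a Cantor set.

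\textbf{Step 2: showing that $E$ contains an interval.} This is the main obstacle. I would try to find a self-similar covering statement of the following form. For each $n$, let $T_n:=E_{k_n+m_n-1}(x_i)$ be the achievement set of the tail after block $K_n$. Let $J_n$ be a specific closed interval, for instance
\[
[\,r_{k_n+m_n-1}-x_{k_n+m_n}\,,\;x_{k_n+m_n}\,]
\]
or a similar central segment, chosen symmetric about $\tfrac12 r_{k_n+m_n-1}$. The goal is to prove $J_n\subset T_n$ for all $n$. The tool is a backward induction-free argument: show that $J_n$ is covered by the translates $f+J_{n+1}$, where $f$ ranges over the finite subsums of the terms between the two blocks. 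This is the standard ``$I_k$ fills up'' argument. For the translates to overlap, each gap between consecutive translates must be at most the length $|J_{n+1}|$, and this is where the precise ratios $\frac{2^{m_n}}{2^{m_n}+1}$ and $\frac{2^{m_n}+1}{2^{m_n+2}}$ enter. The halving runs produce equally spaced translates, and the block $K_{n+1}$ produces $2^{m_{n+1}}$ clusters whose spacing has to be matched against $|J_{n+1}|$.

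Once the covering $J_n\subset\bigcup_f(f+J_{n+1})$ holds for every $n$, iterating it gives $J_n\subset\bigcap_N\bigl(E^{N}_{k_n+m_n}+[0,r_N]\bigr)=T_n$, since $J_{N}\subset[0,r_N]$ and $E=\bigcap I_k$ applies to the tail. Hence $E\supset E_1^{k_n+m_n-1}+J_n$ has nonempty interior, so $E$ is a Cantorval by \cite{GN88}.

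The delicate part is choosing $J_n$ so that the overlap inequality reduces to an identity or an inequality uniform in $m_n$ and in the length of the halving run. I would first test the choice on the Guthrie--Nymann-like case $m_n=1$, where the computation must reproduce the known Cantorval $E(3,2;\tfrac14)$ pattern, and then generalize.
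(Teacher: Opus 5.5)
\textbf{Step 2 (nonempty interior) has a genuine gap: it is only a plan, and the plan provably cannot work as formulated.} For context, the paper does not prove this theorem; it only recalls it from its cited source, so I judged your argument on its own.

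Step 1 is correct in substance but only described. To make it precise, put $L_n:=k_n+m_n-1$ and $g_n:=k_n-k_{n-1}-m_{n-1}\ge1$. The recursion gives $x_{k_n-1}=\frac{2^{m_n}+1}{2}x_{L_n}$, and for $n\ge2$ also $x_{L_n}=2^{-(g_n+m_n)}x_{L_{n-1}}$. The quotient $\rho_n:=r_{L_n}/x_{L_n}$ then satisfies
\[
\rho_{n-1}=\tfrac12+2^{-g_n-1}\bigl(1-2^{-m_n}\bigr)+2^{-(g_n+m_n)}\rho_n ,
\]
and expanding this as a series gives $\frac12<\rho_n<1$. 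All the required inequalities follow from this bound, and all are strict. For example, at $i=k_n-1$ with $m_n=1$, the inequality $x_i<r_i$ is exactly $\rho_n>\frac12$, so there is no equality case. Two smaller corrections:
\begin{itemize}
\item inside a block $x_i=2x_{i+1}$ holds with equality, not strict inequality;
\item the ratio right after a block is $\frac14$ only when $g_{n+1}\ge2$.
\end{itemize}

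Step 2 carries the whole content of the theorem. There are two problems.
\begin{itemize}
\item \textbf{Your explicit $J_n$ is empty.} Since $L_n+1\notin K$, Step 1 gives $x_{L_n+1}<r_{L_n+1}=r_{L_n}-x_{L_n+1}$.
\item \textbf{No intervals symmetric about the centres can work.} No family $J_n$ symmetric about $\frac12 r_{L_n}$ satisfies $J_n\subset\bigcup_f(f+J_{n+1})$ for all $n$. This already fails in the Guthrie--Nymann case $k_n=2n$, $m_n=1$ that you propose to test first.
\end{itemize}
To see the second point, rescale so that $T_n=X:=E(\frac34,\frac12,\frac3{16},\frac18,\ldots)\subset[0,\frac53]$. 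Then:
\begin{itemize}
\item the shifts are $f\in\{0,\frac12,\frac34,\frac54\}$, and $T_{n+1}=\frac14X$ has centre $\frac5{24}$;
\item the centre $\frac56$ of $T_n$ lies in $J_n$, so $\frac56-f\in J_{n+1}$ for some $f$;
\item these four points lie at distances $\frac58,\frac18,\frac18,\frac58$ from $\frac5{24}$, which forces $J_{n+1}\supset[\frac1{12},\frac13]$;
\item this interval contains $(\frac5{48},\frac18)$, a gap of $T_{n+1}$ (a quarter of the principal gap $(\frac5{12},\frac12)$ of $X$).
\end{itemize}
That contradicts $J_{n+1}\subset T_{n+1}$, which your own iteration would yield.

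So intervals symmetric about the centre of $T_n$ can never be covered by translates of centred intervals of $T_{n+1}$. You must instead cover an interval by translates of the \emph{whole} tail set, or by off-centre pieces. In the Guthrie--Nymann case, for instance, the statement $[0,1]\subset I_{2j}\cup(I_{2j}-1)$ passes from $j$ to $j+1$. Indeed, $I_{2j+2}\cup(I_{2j+2}-1)=\frac14\bigl(I_{2j}+\{-4,-2,-1,0,1,2,3,5\}\bigr)$, while the hypothesis for $j$ gives $[0,4]\subset I_{2j}+\{-1,0,1,2,3\}$. Letting $j\to\infty$ yields $[0,1]\subset X\cup(X-1)$, hence $[\frac23,1]\subset X$.

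A self-propagating covering of this kind, adapted to arbitrary $g_n$ and $m_n$, is exactly what is missing. As it stands, your proposal proves only the ``moreover'' clause and the fact that $E(x_n)$ is not a finite union of intervals; it does not prove that $E(x_n)$ is a Cantorval.
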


\begin{proposition}
The Guthrie-Nymann Cantorval is the only Cantorval satisfying the assumptions of Theorem \ref{mami} which is uniquely achievable.
\end{proposition}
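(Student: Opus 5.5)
The plan is to show that, apart from one choice of the parameters, the sequence $(x_i)$ of Theorem \ref{mami} always contains two terms with $x_j=2x_i$. By the remark preceding Theorem \ref{mami}, such a Cantorval is then not uniquely achievable: replace $x_j$ by two copies of $x_i$, as in the proof of Theorem \ref{t4}. The remaining case will turn out to be, up to the free scaling $x_1$, the Guthrie-Nymann sequence. Its unique achievability follows from Theorem \ref{Cantorvale unikalne}, or from \cite[Theorem 5.2]{BGM18}.

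\textbf{Step 1 (forcing $m_n=1$).} Suppose $m_n\ge2$ for some $n$. Then $k_n,k_n+1\in K$, so the first clause of the definition gives $x_{k_n+1}=\frac12x_{k_n}$. Hence we may assume $m_n=1$ for all $n$, that is, $K=\{k_1,k_2,\ldots\}$. Similarly, if some $i>1$ has $i-1,i,i+1\notin K$, then $x_i=\frac12x_{i-1}$. So every $i>1$ must lie within distance $1$ of some $k_n$. This forces $k_{n+1}-k_n\le3$ for all $n$, and $k_1\le3$ once $i=2$ is checked.

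\textbf{Step 2 (the local patterns).} I would compute the ratios $x_i/x_{i-1}$ in the few admissible configurations.

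If $k_1=3$, then $i=2=k_1-1>k_0+m_0=1$, so $x_2=\frac34x_1$. Next $x_3=\frac23x_2=\frac12x_1$, which gives a forbidden doubling. Hence $k_1=2$.

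If $k_{n+1}=k_n+3$, the ratios at $i=k_n+1,k_n+2,k_{n+1}$ are $\frac14$, $\frac34$ and $\frac23$ respectively. The middle value is $\frac34$ because $k_{n+1}-1>k_n+m_n$. Therefore $x_{k_{n+1}}=\frac14\cdot\frac34\cdot\frac23\,x_{k_n}=\frac18x_{k_n}=\frac12x_{k_n+1}$, again a doubling.

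So $k_n=2n$ for all $n$. Then every odd $i\ge3$ satisfies $i=k_n-1=k_{n-1}+m_{n-1}$, with ratio $\frac38$, and every even $i$ has ratio $\frac23$. Consequently
$$(x_i)\;=\;x_1\Bigl(1,\tfrac23,\tfrac14,\tfrac16,\tfrac1{16},\ldots\Bigr)\;=\;\tfrac43x_1\cdot\Bigl(\tfrac34,\tfrac24,\tfrac3{16},\tfrac2{16},\ldots\Bigr),$$
which is a scalar multiple of the sequence $(3,2;\frac14)$.

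\textbf{Step 3 (conclusion).} For this last sequence, the hypotheses of Theorem \ref{Cantorvale unikalne} are checked directly, as the paper notes for every $E(k_1,k_2;q)$. Hence this Cantorval is uniquely achievable; it is the Guthrie-Nymann Cantorval up to scaling. Every other admissible choice of $(k_n),(m_n)$ produces a pair $x_j=2x_i$ and is not uniquely achievable.

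The main obstacle is the bookkeeping in Step 2. One has to make sure that each index $i$ is assigned to the correct clause of the definition, in particular the distinction between $k_n-1>k_{n-1}+m_{n-1}$ and $k_n-1=k_{n-1}+m_{n-1}$, and that no configuration is missed. I would organize this by the gap length $k_{n+1}-k_n\in\{2,3\}$ and by the position of $k_1$.
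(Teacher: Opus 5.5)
Your argument is correct and is essentially the paper's proof: the $\frac12$-clause forces $m_n=1$ and forbids three consecutive indices outside $K$. Any pattern $m,m+1\notin K$, $m+2\in K$ (your cases $k_1=3$ and $k_{n+1}=k_n+3$) then gives $x_{m+2}=\frac23\cdot\frac34\,x_m=\frac12x_m$, so only $K=\{2n\}$ remains, a multiple of $(3,2;\frac14)$. Your explicit computation of that sequence and the check via Theorem \ref{Cantorvale unikalne} simply spell out what the paper asserts in its first sentence.
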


\begin{proof}
The Guthrie-Nymann Cantorval is the only Cantorval (up to multiplication by a constant) satisfying the assumptions of Theorem \ref{mami} with $K=\{2n\colon n\in\N\}$.

Let $(x_n)$ be a sequence satisfying the assumptions of Theorem \ref{mami} with $K\neq \{2n\colon n\in\N\}$. If there is $n\in\N$ such that $n,n+1,n+2 \notin K$ or $n,n+1 \in K$, then, by the definition, $x_{n+1}=\frac12x_n$, and so $E(x_n)$ is not uniquely achievable. Therefore, if $n \in K$, then $n+1 \notin K$, and since $K\neq \{2n\colon n\in\N\}$, there is $m \in \N$ such that $m,m+1\notin K$ and $m+2 \in K$. Then $x_{m+1} = \frac34 x_m$ and $x_{m+2} = \frac23x_{m+1} = \frac12x_m$. Thus, once again, there is a double of another term in $x_n$, so $E(x_n)$ is not uniquely achievable.
\end{proof}

There are other known families of Cantorvals that cannot be uniquely achievable. For example, it is not difficult to see that sequences generating Marchwicki-Miska Cantorvals or generalized Ferens Cantorvals (for definitions, see \cite{NP}, cf. \cite{MM,MNP}) must contain a duplication of another term. On the other hand, Kyiv Cantorvals (see \cite{NP}, cf. \cite{VMPS19}) have terms repeating at least three times.

Our observations on the unique achievability of Cantorvals are only preliminary. There is much to investigate in the future, and we conclude our note with two relevant and natural open problems.

\begin{problem}
Are there uniquely achievable Cantorvals other than those arising from Theorem \ref{Cantorvale unikalne}?
\end{problem}

\begin{problem}
Is it true that a Cantorval $E(x_n)$ is uniquely achievable if and only if the sequence $(x_n)$ does not contain neither a duplication of any term, nor a term repeating three times, nor a triple of a term that repeats two times?
\end{problem}

\end{document}